\documentclass{amsart} 
\usepackage{amssymb,times, amscd,amsmath,amsthm, xypic}
\usepackage{geometry}
\geometry{left=3.75cm, right=3.75cm, bottom=1 in , top=1.3in,  includefoot}

\author{Vittoria Bussi$^{(*)}$ and Shoji Yokura$^{(**)}$}

\address
{Vittoria Bussi: The Mathematical Institute, 24-29 St. Giles, Oxford, OX1 3LB, U.K.}
\email{bussi@maths.ox.ac.uk}

\address
{Shoji Yokura: Department of Mathematics and Computer Science, 
Faculty of Science, 
Kagoshima University, 21-35 Korimoto 1-chome, Kagoshima 890-0065, Japan}
\email {yokura@sci.kagoshima-u.ac.jp}

\date{}

\thanks{(*) Funded by EPSRC\\
\quad (**) Partially supported by JSPS KAKENHI Grant Number 24540085}

\title[Naive motivic Donaldson--Thomas type Hirzebruch classes]
{Naive motivic Donaldson--Thomas type Hirzebruch classes \\ 
and some problems }

\begin{document} 
\numberwithin{equation}{section}
\newtheorem{thm}[equation]{Theorem}
\newtheorem{pro}[equation]{Proposition}
\newtheorem{prob}[equation]{Problem}
\newtheorem{qu}[equation]{Question}
\newtheorem{cor}[equation]{Corollary}
\newtheorem{con}[equation]{Conjecture}
\newtheorem{lem}[equation]{Lemma}
\theoremstyle{definition}
\newtheorem{ex}[equation]{Example}
\newtheorem{defn}[equation]{Definition}
\newtheorem{ob}[equation]{Observation}
\newtheorem{rem}[equation]{Remark}
\renewcommand{\rmdefault}{ptm}
\def\alp{\alpha}
\def\be{\beta}
\def\jeden{1\hskip-3.5pt1}
\def\om{\omega}
\def\bigstar{\mathbf{\star}}
\def\ep{\epsilon}
\def\vep{\varepsilon}
\def\Om{\Omega}
\def\la{\lambda}
\def\La{\Lambda}
\def\si{\sigma}
\def\Si{\Sigma}
\def\Cal{\mathcal}
\def\m {\mathcal}
\def\ga{\gamma}
\def\Ga{\Gamma}
\def\de{\delta}
\def\De{\Delta}
\def\bF{\mathbb{F}}
\def\bH{\mathbb H}
\def\bPH{\mathbb {PH}}
\def \bB{\mathbb B}
\def \bA{\mathbb A}
\def \bC{\mathbb C}
\def \bOB{\mathbb {OB}}
\def \bM{\mathbb M}
\def \bOM{\mathbb {OM}}
\def \mA{\mathcal A}
\def \mB{\mathcal B}
\def \mC{\mathcal C}
\def \mR{\mathcal R}
\def \mH{\mathcal H}
\def \mM{\mathcal M}
\def \mM{\mathcal M}
\def \mTOP{\mathcal {TOP}}
\def \mAB{\mathcal {AB}}
\def \bK{\mathbb K}
\def \bG{\mathbf G}
\def \bL{\mathbb L}
\def\bN{\mathbb N}
\def\bR{\mathbb R}
\def\bP{\mathbb P}
\def\bZ{\mathbb Z}
\def\bC{\mathbb  C}
\def \bQ{\mathbb Q}
\def\op{\operatorname}

\maketitle

\begin{abstract}
Donaldson-Thomas invariant is expressed as the weighted Euler
 characteristic of the so-called Behrend (constructible) function. In \cite{Behrend} 
Behrend introduced a DT-type invariant for a morphism. Motivated by this invariant, we extend the motivic Hirzebruch class to naive Donaldson-Thomas type analogues. We also discuss a categorification of the DT-type invariant for
a morphism from a bivariant-theoretic viewpoint, and we finally pose some related questions for further investigations.
\end{abstract}


\section{Introduction}

The Donaldson--Thomas invariant $\chi^{DT}(\m M)$ (abbr. DT invariant) is the virtual count of the moduli space $\m M$ of stable coherent sheaves on a Calabi--Yau threefold over $k$. Here $k$ is an algebraically closed field of characteristic zero. Foundational materials for DT invariants can be found in \cite{Thomas}, \cite{Behrend}, \cite{JS}, \cite{KS1}. In \cite{Behrend} Behrend made an important observation that the Donaldson--Thomas invariant $\chi^{DT}(\m M)$ is described as the weighted Euler characteristic $\chi(\m M, \nu_{\m M})$ of the so-called Behrend (constructible) function $\nu_{\m M}$.
For a scheme $X$ of finite type, the Donaldson--Thomas type invariant $\chi^{DT}(X)$ is defined as $\chi(X, \nu_X)$.
The topological Euler characteristic (more precisely, the topological Euler characteristic with compact support) $\chi$ satisfies the scissor formula
$\chi(X) = \chi(Z) + \chi(X \setminus Z)$
for a closed subvariety $Z \subset X$. This scissor formula implies that $\chi$ can be considered as the homomorphism from the Grothendieck group of varieties 
$\chi:K_0(\m V) \to \bZ$, and furthermore it can be extended to the relative Grothendieck group, $\chi:K_0(\m V/X) \to \bZ$ for each scheme $X$. The Grothendieck--Riemann--Roch version of the homomorphism $\chi:K_0(\m V/X) \to \bZ$ is the motivic Chern class transformation
${T_{-1}}_*:K_0(\m V/X) \to H_*^{BM}(X)\otimes \bQ$. Namely we have that
\begin{itemize}
\item When $X$ is a point, ${T_{-1}}_*:K_0(\m V/X) \to H_*^{BM}(X)\otimes \bQ$ equals the homomorphism $\chi:K_0(\m V) \to \bZ \hookrightarrow \bQ$.
\item The composite $\int_X \circ \,  {T_{-1}}_* = \chi: K_0(\m V/X) \to \bZ \hookrightarrow \bQ$. 
\end{itemize}
Here ${T_{-1}}_*:K_0(\m V/X) \to H_*^{BM}(X)\otimes \bQ$ is the specialization to $y = -1$ of the motivic Hirzebruch class transformation ${T_y}_*:K_0(\m V/X) \to H_*^{BM}(X)\otimes \bQ[y]$ (see \cite{BSY}).\\

On the other hand the Donaldson--Thomas type invariant $\chi^{DT}(X)$ does not in general satisfy the scissor formula
$\chi^{DT}(X) \not = \chi^{DT}(Z) + \chi^{DT}(X \setminus Z).$ Namely, $\chi^{DT}(-)$ cannot be captured as a homomorphism $\chi^{DT}:K_0(\m V) \to \bZ.$
Instead the following scissor formula holds:
\begin{equation}\label{scissor}
\chi^{DT}(X \xrightarrow {\op{id}_X} X)  = \chi^{DT}(Z \xrightarrow {i_{Z,X}} X) + \chi^{DT}(X \setminus Z \xrightarrow {i_{X \setminus Z,X}} X).
\end{equation}
Here $i_{Z,X}$ and  $i_{X \setminus Z,X}$ are the inclusions. For this formula to make sense, we need 
the Donaldson--Thomas type invariant $\chi^{DT}(X \xrightarrow f Y)$ for a morphism $f:X \to Y$, which is also introduced in \cite{Behrend} and simply defined as $\chi(X, f^*\nu_Y)$. Then, $\chi^{DT}$ can be considered as the homomorphism
$\chi^{DT}:K_0(\m V/X) \to \bZ.$ Note that in the case when $X$ is a point, $\chi^{DT}:K_0(\m V/pt) = K_0(\m V) \to \bZ$ is the usual Euler characteristic homomorphism $\chi:K_0(\m V) \to \bZ.$ 

In this paper we consider Grothendieck--Riemann--Roch type formulas for $\chi^{DT}$, using the motivic Hirzebruch class transformation ${T_y}_*$ (\cite{BSY}), and we also propose to consider a bivariant-theoretic aspect for the ``categorification" of the DT invariant. By that we mean a graded vector space encoding an appropriate cohomology theory whose Euler characteristic is equal to DT invariant. Naive reasons for the latter one are the following. The categorification of the Euler characteristic is nothing but 
$$\chi(X) := \sum_i (-1)^i \op{dim}_{\bR} H^i_c(X; \bR).$$ 
Note that the compact-support-cohomology $H^i_c(X; \bR)$ is isomorphic to the Borel--Moore homology  $H^{BM}_i(X; \bR)$.  The categorification of the Hirzebruch $\chi_y$-genus is 
$$\chi_y(X) = \sum (-1)^i \op{dim}_{\bC} Gr^p_F (H^i_c(X;\bC))(-y)^p$$
 with $F$ being the Hodge filtration of the mixed Hodge structure of $H^i_c(X;\bC)$.
 Since the DT type invariant of a morphism satisfies the scissor formula (\ref{scissor}), we propose to introduce some bivariant-theoretic homology theory $\Theta^*(X \xrightarrow f Y)$ ``categorifying" $\chi^{DT}(X \xrightarrow f Y)$, that is $\chi^{DT}(X \xrightarrow f Y)=\sum_i (-1)^i \op {dim} \Theta^i(X \xrightarrow f Y)$. (Here we denote it ``symbolically"; as described in the case of $\chi_y$-genus, the above alternating sum of the dimensions might be complicated involving some other ingredients such as mixed Hodge structures.)
\section{Donaldson--Thomas type invariants of morphisms}


Let $\frak K$ be an algebraically closed field of characteristic $p$, which is not necessarily zero. 
Let $X$ be a $\frak K$-scheme of finite type. For a prime number $\ell$ such that $\ell \not = p$ and the field $\bQ_{\ell}$ of $\ell$-adic numbers, the following Euler characteristic
$$\chi(X):= \sum_i (-1)^i \op{dim}_{\bQ_{\ell}}H_c^i(X, \bQ_{\ell})$$
is independent on the choice of the prime number $\ell$. In fact the following properties hold (e.g., see \cite[Theorem 3.10]{Joyce}):

\begin{thm}\label{basic} Let $\frak K$ be an algebraically closed field and $X, Y$ be separated
$\frak K$ -schemes of finite type. Then
\begin{enumerate}
\item  If $Z$ is a closed subscheme of $X$, then $\chi(X) = \chi(Z) + \chi(X \setminus Z)$.
\item  $\chi(X \times Y ) = \chi(X) \chi(Y)$.
\item $\chi(X)$ is independent of the choice of $\ell$ in the above definition
\item If $\frak K = \bC$, $\chi(X)$ is the usual Euler characteristic with the analytic topology.
\item  $\chi(\frak K^m) = 1$ and $\chi(\frak K\bP ^m) = m+ 1$ for $\forall m>0$
\end{enumerate}
\end{thm}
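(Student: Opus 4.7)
The plan is to establish the five assertions roughly in the order (4), (1), (2), (5), (3), since the last is by far the hardest and benefits from the preceding items. For (4), I would invoke Artin's comparison theorem, which for any separated $\bC$-scheme $X$ of finite type gives a canonical isomorphism $H^i_c(X, \bQ_\ell) \cong H^i_c(X^{\op{an}}, \bQ) \otimes_{\bQ} \bQ_\ell$; taking $\bQ_\ell$-dimensions recovers the topological Euler characteristic with compact support of the associated analytic space, proving (4) and simultaneously giving independence of $\ell$ in characteristic zero.

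For (1), the key input is the long exact sequence in $\ell$-adic compactly supported cohomology associated to the open--closed decomposition $X \setminus Z \hookrightarrow X \hookleftarrow Z$:
$$\cdots \to H^i_c(X \setminus Z, \bQ_\ell) \to H^i_c(X, \bQ_\ell) \to H^i_c(Z, \bQ_\ell) \to H^{i+1}_c(X \setminus Z, \bQ_\ell) \to \cdots.$$
Since $X$, $Z$ and $X \setminus Z$ are separated of finite type, each $H^i_c$ is a finite-dimensional $\bQ_\ell$-vector space vanishing above degree $2\op{dim} X$, and the alternating sum of dimensions yields (1). For (2), I would apply the Künneth formula for $\ell$-adic compactly supported cohomology and again take alternating dimensions. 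For (5), one computes $\chi(\bA^1) = 1$ directly (by Poincaré duality, $H^2_c(\bA^1) \cong \bQ_\ell$ and all other degrees vanish); induction on $m$ via (2) gives $\chi(\bA^m) = 1$, after which the decomposition $\bP^m = \bA^m \sqcup \bP^{m-1}$ together with (1) yields $\chi(\bP^m) = m+1$ by induction on $m$.

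The main obstacle is (3) in positive characteristic. The strategy is to reduce to smooth projective varieties, where independence of $\ell$ for Betti numbers follows from Deligne's proof of the Weil conjectures. Iterating (1), combined with Chow's lemma, Noetherian induction on $\op{dim} X$, and de Jong's theorem on alterations, expresses $\chi(X)$ as a $\bZ$-linear combination of Euler characteristics of smooth projective $\frak K$-varieties. A cleaner packaging uses the Grothendieck ring of varieties: by (1) and (2), each candidate $\chi_\ell$ descends to a ring homomorphism $K_0(\m V_{\frak K}) \to \bZ$; these maps agree on smooth projective varieties by Weil~II, and by resolution/alteration such classes generate the Grothendieck ring, forcing the two homomorphisms to coincide. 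The crucial input is Deligne's theorem; the rest is essentially bookkeeping.
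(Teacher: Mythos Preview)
The paper does not prove this theorem at all: it is stated as a quotation of \cite[Theorem~3.10]{Joyce}, with no argument given. Your proposal therefore cannot be compared to a proof in the paper, but it is worth recording that your sketch is the standard one and is essentially correct. The arguments for (1), (2), (4), (5) are exactly right: the open--closed long exact sequence, K\"unneth, Artin comparison, and the elementary inductions on $m$.

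For (3) in positive characteristic your outline is also the right one, but the final step deserves a word of caution. Saying that classes of smooth projective varieties ``generate the Grothendieck ring'' via alterations is slightly glib: a de Jong alteration $\widetilde X\to X$ is only generically finite of some degree $d$, so one does not get $[X]$ as an integral combination of smooth projective classes for free. What one actually does is combine stratification into smooth locally closed pieces (using (1)) with an induction on dimension, and then for each smooth piece pass to a smooth projective compactification produced by alterations together with further excision on the boundary; alternatively one works in $K_0(\m V_{\frak K})\otimes\bQ$, where the generation statement is clean. Either way the essential input is exactly what you name, Deligne's independence of $\ell$ for Betti numbers of smooth projective varieties, and the rest is indeed bookkeeping---just slightly more bookkeeping than your last sentence suggests.
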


For a constructible function $\alp: X \to \bZ$ on $X$ the weighted Euler characteristic $\chi (X, \alp)$ is defined by
$$\chi(X, \alp):= \sum_m m \chi(\alp^{-1}(m)).$$


Let $X$ be embeddable in a smooth scheme $M$ and let $C_MX$ be the normal cone of $X$ in $M$ and let $\pi: C_MX \to X$ be the projection and $C_MX = \sum m_iC_i$, where $m_i \in \mathbb Z$ are multiplicities and $C_i$'s are irreducible components of the cycle. Then the following cycle
$$\frak C_{X/M} := \sum (-1)^{dim(\pi(C_i))} m_i \pi(C_i) \in \mathcal Z(X)$$
is in fact independent of the choice of the embedding of $X$ into a smooth $M$, thus simply denoted by $\frak C_X$ without referring to the ambient smooth $M$ and is called the distinguished cycle of the scheme. Then consider the isomorphism from the abelian groups $\m Z(X)$ of cycles to the abelian group $\m F(X)$ of constructible functions
$$\op {Eu} : \m Z(X) \xrightarrow {\cong} \m F(X)$$
which is defined by $\op {Eu}(\sum_i m_i [Z_i]) := \sum_i m_i \op {Eu}_{Z_i}$, where $\op {Eu}_Z$ denotes the local Euler obstruction supported on the subscheme $Z_i$. Then the image of the distinguished cycle $\frak C_X$ under the above isomorphism $\op {Eu}$ defines a canonical integer valued constructible function
$$\nu_X := \op {Eu}(\frak C_X),$$
which is called the \emph{Behrend} function. The fundamental properties of the Behrend function are the following.
\begin{thm}
\begin{enumerate}

\item For a smooth point $x$ of a scheme $X$ of dimension $n$, $\nu_X(x) =(-1)^n$. In particular, if $X$ is smooth of dimension $n$, then $\nu_X= (-1)^n\jeden_X$.
\item $\nu_{X \times Y}  = \nu_X \nu_Y$. 
\item If $f: X \to Y$  is smooth of relative dimension $n$, then $\nu_X = (-1)^n f^* \nu_Y$ . 
\item In particular, if $f: X \to Y$ is \'etale, then  $\nu_X = f^* \nu_Y$. \\
\end{enumerate}
\end{thm}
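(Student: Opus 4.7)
The plan is to establish the four items in the order $(1), (2), (4), (3)$, using the definition $\nu_X = \op{Eu}(\frak C_X)$ together with two standard facts about the local Euler obstruction: $\op{Eu}_Z = \jeden_Z$ whenever $Z$ is smooth, and the multiplicativity $\op{Eu}_{Z_1\times Z_2}(x,y) = \op{Eu}_{Z_1}(x)\,\op{Eu}_{Z_2}(y)$ under products of germs.

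For (1), take $M = X$, so that the embedding is the identity of the smooth scheme $X$. Then the normal cone $C_M X = X$ is a single irreducible component of multiplicity one, the projection $\pi$ is the identity, and $\dim \pi(C_1) = n$. Hence $\frak C_X = (-1)^n [X]$, and smoothness of $X$ gives $\op{Eu}_X = \jeden_X$, so $\nu_X = (-1)^n \jeden_X$. The pointwise statement at a smooth point of a possibly singular $X$ then follows by restricting to a smooth open neighborhood and invoking the étale invariance of $\nu$ proved in (4). For (2), fix embeddings $X \hookrightarrow M$ and $Y \hookrightarrow N$ into smooth schemes and use $X \times Y \hookrightarrow M \times N$. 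The normal cone splits as $C_{M\times N}(X \times Y) = C_M X \times C_N Y$, whose irreducible components are products $C_i \times D_j$ with multiplicities $m_i n_j$, and $\dim \pi(C_i \times D_j) = \dim \pi_X(C_i) + \dim \pi_Y(D_j)$. Therefore $\frak C_{X \times Y} = \frak C_X \times \frak C_Y$ as cycles, and the multiplicativity of $\op{Eu}$ under products yields $\nu_{X\times Y} = \nu_X \nu_Y$.

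For (4), given an étale $f : X \to Y$ and an embedding $Y \hookrightarrow M$ into smooth $M$, one realizes $X$ étale-locally inside a smooth $M'$ equipped with an étale map $M' \to M$ fitting into a Cartesian square with $f$. Since normal cones commute with flat base change and the local Euler obstruction is preserved by étale pullback, $\frak C_X = f^*\frak C_Y$, and hence $\nu_X = f^*\nu_Y$. Finally (3) follows by combining (1), (2), and (4): a smooth morphism of relative dimension $n$ is étale-locally on $X$ isomorphic to the projection $Y \times \bA^n \to Y$, for which (1) and (2) give $\nu_{Y \times \bA^n} = \nu_Y \cdot \nu_{\bA^n} = (-1)^n \, \text{pr}_Y^* \nu_Y$; one then glues along an étale cover using (4).

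The main obstacle is (3). Its proof is not a direct computation from the definition but a gluing argument: the distinguished cycle $\frak C_X$ depends on an ambient embedding into a smooth scheme, and a smooth morphism $f : X \to Y$ need not extend to a compatible morphism of such ambient embeddings. One is thus forced to reduce to the product situation through the étale-local structure theorem for smooth morphisms and then to descend via (4). Everything else reduces to direct inspection of the cycle-level definitions combined with the elementary properties of $\op{Eu}$ recalled at the outset.
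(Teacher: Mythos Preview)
The paper does not supply its own proof of this theorem: it is stated without proof as the list of ``fundamental properties of the Behrend function,'' with the implicit reference to Behrend's original paper \cite{Behrend}. So there is no in-paper argument to compare your proposal against.

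That said, your outline is essentially the standard proof (and is close to Behrend's own argument in \cite[Proposition~1.5]{Behrend}). The logical order $(1),(2),(4),(3)$ is the right one, and the reductions you describe---computing $\frak C_X$ directly when $X=M$ is smooth, using the product decomposition of normal cones for (2), flat base change plus \'etale invariance of $\op{Eu}$ for (4), and the \'etale-local structure theorem for smooth morphisms to reduce (3) to a projection---are all correct. Two places deserve a line more of justification if you write this out in full: the identity $C_{M\times N}(X\times Y)\cong C_MX\times C_NY$ is true but not entirely trivial (it follows from the bigraded decomposition of $\bigoplus K^n/K^{n+1}$ for $K=I\boxtimes\mathcal O_N+\mathcal O_M\boxtimes J$), and in (4) the existence of the Cartesian square with smooth ambient $M'\to M$ \'etale over $M$ should be spelled out (e.g.\ by working Zariski-locally on $X$ and lifting the \'etale map). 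Neither is a genuine gap; both are routine.
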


The weighted Euler characteristic of the above Behrend function is called the \emph{Donaldson--Thomas type invariant} and denoted by $\chi^{DT}(X)$:
$$\chi^{DT}(X):= \chi(X, \nu_X).$$

In \cite[Definition 1.7]{Behrend} Kai Behrend defined the following.

\begin{defn}
The \emph{DT-invariant} or \emph{virtual count} of a morphism $f:X \to Y$ is defined by
$$\chi^{DT}(X \xrightarrow {f} Y) := \chi(X, f^*\nu_Y),$$\
where $\nu_Y$ is the Behrend function of the target scheme $Y$. 
\end{defn}

\begin{rem} Here we emphasize that $\chi^{DT}(X \xrightarrow {f} Y)$ is defined by the constructible function $f^*\nu_Y$ \emph {on the source scheme $X$}. From the definition we can observe the following:
\begin{enumerate}
\item $\chi^{DT}(X \xrightarrow {\op{id}_X} X) = \chi(X,\nu_X) = \chi^{DT}(X)$ is the DT-invariant of $X$.
\item $\chi^{DT}(X \xrightarrow {\pi_X} pt) = \chi(X,f^*\nu_{pt}) = \chi(X, \jeden_X) = \chi(X)$ is the topological Euler-Poincar\'e characteristic of $X$.
\item If $Y$ is \emph{smooth}, whatever the morphism $f:X \to Y$ is, we have
$$\chi^{DT}(X \xrightarrow {f} Y) = (-1)^{\op{dim}Y} \chi(X).$$
The very special case is that $Y = pt$, which is the above (2). \\
\end{enumerate}
\end{rem}

The Euler characteristic $\chi(-)$ satisfies the additivity $\chi(X) = \chi(Z) + \chi(X \setminus Z)$ for a closed subscheme $Z \subset X$. Hence, $\chi$ is considered as a homomorphism from the Grothendieck group of varieties $\chi:K_0(\m V) \to \bZ$ and furthermore as a homomorphism from the relative Grothendieck group of varieties over a fixed variety $X$ (\cite{Looijenga}) 
$$\chi: K_0(\m V/X) \to \bZ,$$
which is defined by $\chi([V \xrightarrow h X]) = \chi(V) = \chi(V, \jeden_V) = \chi(V,h^*\jeden_X) = \chi(X,h_*\jeden_V)$. Moreover, the following diagram commutes:
\begin{equation}\label{diagram}
\xymatrix{
K_0(\m V/X)\ar[dr]_ {\chi}\ar[rr]^ {f_*} && K_0(\m V/Y)\ \ar[dl]^{\chi}\\
& \bZ.}
\end{equation}
On the other hand we have  that 
$\chi^{DT}(X) \not = \chi^{DT}(Z) + \chi^{DT}(X \setminus Z).$
Thus $\chi^{DT}(-)$ cannot be captured as a homomorphism $\chi^{DT}:K_0(\m V) \to \bZ.$
However, we have that
$$\chi^{DT}(X \xrightarrow {\op{id}_X} X)  = \chi^{DT}(Z \xrightarrow {i_{Z,X}} X) + \chi^{DT}(X \setminus Z \xrightarrow {i_{X \setminus Z,X}} X).$$ 

\begin{lem} If we define $\chi^{DT}([V \xrightarrow h X]):= \chi(V, h^*\nu_X)$, then we get the homomorphism $\chi^{DT}:K_0(\m V/X) \to \bZ$.
\end{lem}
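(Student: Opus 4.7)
By construction, $K_0(\m V/X)$ is the quotient of the free abelian group on isomorphism classes $[V \xrightarrow h X]$ of morphisms (with $V$ of finite type) by the scissor relations
$$[V \xrightarrow h X] = [Z \xrightarrow {h \circ i_Z} X] + [V \setminus Z \xrightarrow {h \circ i_U} X]$$
for every closed embedding $i_Z : Z \hookrightarrow V$ with open complement $i_U : V \setminus Z \hookrightarrow V$. To produce the homomorphism, I plan to check that the rule $[V \xrightarrow h X] \mapsto \chi(V, h^*\nu_X)$ is (i) invariant under isomorphism over $X$ and (ii) additive across such a closed/open decomposition.

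Step (i) is immediate from the definitions: if $\varphi : V \xrightarrow{\cong} V'$ satisfies $h'\circ\varphi = h$, then $h^*\nu_X = \varphi^*((h')^*\nu_X)$, and for each $m\in\bZ$ the isomorphism $\varphi$ restricts to an isomorphism of level sets $(h^*\nu_X)^{-1}(m) \cong ((h')^*\nu_X)^{-1}(m)$, so the two weighted Euler characteristics coincide term by term.

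Step (ii) reduces to a single clean lemma: for any constructible function $\alp : V \to \bZ$ and any closed subscheme $Z \subset V$ with open complement $U = V \setminus Z$,
$$\chi(V, \alp) = \chi(Z, \alp|_Z) + \chi(U, \alp|_U).$$
The plan for this is to unwind the definition $\chi(V,\alp) = \sum_m m \, \chi(\alp^{-1}(m))$, observe the disjoint set-theoretic decomposition $\alp^{-1}(m) = (\alp|_Z)^{-1}(m) \sqcup (\alp|_U)^{-1}(m)$ (valid because $Z$ is closed and $U$ open in $V$), and apply Theorem \ref{basic}(1) to each level set. Then specializing $\alp := h^*\nu_X$ and using functoriality $(h^*\nu_X)|_Z = (h\circ i_Z)^*\nu_X$ of constructible-function pull-back yields exactly the required scissor identity for $\chi^{DT}$.

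There is essentially no obstacle here: the content of the lemma is that passing from $\chi$ to the \emph{weighted} Euler characteristic $\chi(-,\alp)$ preserves additivity over closed/open pairs, and the Behrend function enters only through its pull-back $h^*\nu_X$, which is a constructible function on the source like any other. The point of stating the lemma is to record that, although $\chi^{DT}$ is not additive on $K_0(\m V)$ (since $\nu_X$ changes when $X$ changes), fixing the target $X$ and pulling back the \emph{same} constructible function $\nu_X$ restores additivity and thus descends to a homomorphism on $K_0(\m V/X)$.
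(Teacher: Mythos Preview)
Your proposal is correct and follows essentially the same approach as the paper: verify isomorphism-invariance, then check the scissor relation by using additivity of the weighted Euler characteristic $\chi(V,\alp)=\chi(Z,\alp|_Z)+\chi(U,\alp|_U)$ applied to $\alp=h^*\nu_X$. You are slightly more explicit than the paper in justifying that additivity via the level-set decomposition and Theorem~\ref{basic}(1), but the argument is the same.
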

\begin{proof} Clearly the definition $\chi^{DT}([V \xrightarrow h X]):= \chi(V, h^*\nu_X)$ is 
independent of the choice of the representative of the isomorphism class $[V \xrightarrow h X]$. For a closed subvariety $W \subset V$, we have
\begin{align*}
\chi^{DT}([V \xrightarrow h X] &= \chi(V, h^*\nu_X)\\
& = \chi(W, h^*\nu_X) + \chi(V \setminus W, h^*\nu_X)\\
& = \chi(W, h_{|W}^*\nu_X) + \chi(V \setminus W, h_{|V \setminus W}^*\nu_X)\\
& = \chi^{DT}([W \xrightarrow {h_{|W}} X]) + \chi^{DT}([W \xrightarrow {h_{|V \setminus W}} X]).
\end{align*}
Thus we get the homomorphism $\chi^{DT}:K_0(\m V/X) \to \bZ$.
\end{proof}

\begin{lem} If $f:X \to Y$ satisfies the condition that $\nu_X = f^*\nu_Y$ (such a morphism shall be called a ``Behrend morphism") , then the following diagram commutes:
$$\xymatrix{
K_0(\m V/X)\ar[dr]_ {\chi^{DT}}\ar[rr]^ {f_*} && K_0(\m V/Y)\ \ar[dl]^{\chi^{DT}}\\
& \bZ.}$$
\end{lem}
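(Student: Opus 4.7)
The plan is to check commutativity on generators of $K_0(\m V/X)$, since both composites are group homomorphisms (the map $\chi^{DT}: K_0(\m V/X) \to \bZ$ by the previous lemma, and $f_*: K_0(\m V/X) \to K_0(\m V/Y)$ by its standard definition $f_*[V \xrightarrow{h} X] := [V \xrightarrow{f \circ h} Y]$, together with the corresponding homomorphism $\chi^{DT}$ on $K_0(\m V/Y)$). So it is enough to fix a generator $[V \xrightarrow{h} X]$ and verify
$$\chi^{DT}([V \xrightarrow{h} X]) = \chi^{DT}(f_*[V \xrightarrow{h} X]) = \chi^{DT}([V \xrightarrow{f \circ h} Y]).$$

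First I would unwind the definitions. By the definition introduced in the preceding lemma,
$$\chi^{DT}([V \xrightarrow{h} X]) = \chi(V, h^*\nu_X), \qquad \chi^{DT}([V \xrightarrow{f \circ h} Y]) = \chi(V, (f\circ h)^*\nu_Y).$$
Next I would invoke the standard functoriality of pullback of constructible functions under composition of morphisms, which gives $(f \circ h)^* \nu_Y = h^*(f^* \nu_Y)$. At this point I would plug in the Behrend-morphism hypothesis $\nu_X = f^* \nu_Y$ to obtain $h^*(f^* \nu_Y) = h^* \nu_X$, so that the two constructible functions on $V$ whose weighted Euler characteristics compute the two sides are literally equal. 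This yields the desired equality.

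There is essentially no hard step: once the definitions of $f_*$, $\chi^{DT}$ on $K_0(\m V/X)$ and $K_0(\m V/Y)$, and functoriality of pullback for constructible functions are in place, the hypothesis $\nu_X = f^*\nu_Y$ does all the work. The only point that requires any care is verifying well-definedness on $K_0(\m V/X)$, but this is already handled by the previous lemma applied to both $X$ and $Y$, so nothing new is needed. Hence the lemma is a direct consequence of the definition of ``Behrend morphism'' together with functoriality of $h \mapsto h^*$ on constructible functions.
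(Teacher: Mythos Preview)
Your proof is correct and follows essentially the same approach as the paper: check on generators, unwind the definition of $\chi^{DT}$ and of $f_*$, use functoriality $(f\circ h)^* = h^* f^*$ of pullback of constructible functions, and then apply the hypothesis $\nu_X = f^*\nu_Y$. The paper's proof is exactly this chain of equalities, written out line by line.
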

\begin{proof} It is straightforward:
\begin{align*}
\chi^{DT} \circ f_* ([V \xrightarrow h X]) & = \chi^{DT}([V \xrightarrow {f \circ h} X]) \\
& = \chi(V, (f \circ h)^*\nu_Y) \\
& = \chi(V, h^*f^*\nu_Y) \\
& = \chi(V, h^*\nu_X) \quad \text {(since $\nu_X = f^*\nu_Y$)}\\
& = \chi^{DT}([V \xrightarrow h X]).
\end{align*}.
\end{proof}
\begin{rem} 
An \'etale map is a typical example of a Behrend morphism. 
\end{rem}

\begin{rem}For a general morphism $f: X \to Y$, we have that $f^*\nu_Y = (-1)^{\op {reldim} f} \nu_X + \Theta(X_{sing} \cup f^{-1}(Y_{sing}))$, where $\op {reldim} f := \op {dim} X - \op {dim}Y$ is the relative dimension of $f$ and $\Theta(X_{sing} \cup f^{-1}(Y_{sing}))$ is some constructible functions supported on the singular locus $X_{sing}$ of $X$ and the inverse image of the singular locus $Y_{sing}$ of $Y$. 
As 
$$\nu_X = (-1)^{\op {dim} X} \jeden_X + \text {some constructible function supported on $X_{sing}$},$$
then
$$f^*\nu_Y = (-1)^{\op {dim}X} f^*\jeden_Y + f^*(\text {some constructible function supported on $Y_{sing}$}).$$ Hence in general we have
$$(\chi^{DT}\circ f_*)([V \xrightarrow h X]) = (-1)^{\op {reldim} f}\chi^{DT}([V \xrightarrow h X]) + \text{extra terms}.$$
\end{rem}

To avoid taking care of the sign, let us introduce the twisted Behrend function 
$$\widetilde \nu_X := (-1)^{\op {dim} X} \nu_X,$$
which will be used later again. Note that if $X$ is smooth, $\widetilde \nu_X = \jeden_X$. Then we define the twisted Donaldson--Thomas type invariant $\widetilde {\chi}^{DT}(X)$ by
$\widetilde {\chi}^{DT}(X \xrightarrow f Y) := \chi(X, f^*\widetilde \nu_Y).$
Then for a morphism $f:X \to Y$ we have
$f^*\widetilde \nu_Y = \widetilde \nu_X + \widetilde \Theta(X_{sing} \cup f^{-1}(Y_{sing})).$
In particular the above lemma is modified as follows:
\begin{lem}
 If $f:X \to Y$ satisfies the condition that $\widetilde\nu_X = f^*\widetilde\nu_Y$ (such a morphism shall be called a ``twisted Behrend morphism"; a smooth morphism is a typical example for $\widetilde\nu_X = f^*\widetilde\nu_Y$) , then the following diagram commutes:
$$\xymatrix{
K_0(\m V/X)\ar[dr]_ {\widetilde \chi^{DT}}\ar[rr]^ {f_*} && K_0(\m V/Y)\ \ar[dl]^{\widetilde \chi^{DT}}\\
& \bZ.}$$
\end{lem}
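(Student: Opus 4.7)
The plan is to mirror exactly the proof of the preceding (non-twisted) lemma, replacing the Behrend function $\nu$ by the twisted Behrend function $\widetilde\nu$ throughout, and then to justify separately the parenthetical remark that a smooth morphism is a typical example of a twisted Behrend morphism.

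For the commutativity of the diagram, I would take a generator $[V \xrightarrow{h} X] \in K_0(\m V/X)$ and compute
\begin{align*}
\widetilde\chi^{DT} \circ f_*([V \xrightarrow{h} X])
&= \widetilde\chi^{DT}([V \xrightarrow{f \circ h} Y]) \\
&= \chi(V, (f \circ h)^* \widetilde\nu_Y) \\
&= \chi(V, h^* f^* \widetilde\nu_Y) \\
&= \chi(V, h^* \widetilde\nu_X) \quad \text{(by the twisted Behrend hypothesis $\widetilde\nu_X = f^*\widetilde\nu_Y$)} \\
&= \widetilde\chi^{DT}([V \xrightarrow{h} X]).
\end{align*}
The fact that $\widetilde\chi^{DT}$ is a well-defined homomorphism $K_0(\m V/X) \to \bZ$ is not stated as a separate lemma here, but it is needed for the diagram to make sense; I would note that it follows by the identical argument given in the previous lemma (additivity of $\chi(-, h^*\widetilde\nu_X)$ with respect to the decomposition $V = W \sqcup (V \setminus W)$).

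For the parenthetical assertion, if $f:X \to Y$ is smooth of relative dimension $n$, then by part (3) of the theorem on properties of $\nu$, we have $\nu_X = (-1)^n f^*\nu_Y$. Multiplying both sides by $(-1)^{\op{dim} X}$ and using $\op{dim} X = \op{dim} Y + n$ (which holds for a smooth morphism of relative dimension $n$, on each connected component), we get
\[
\widetilde\nu_X = (-1)^{\op{dim} X}\nu_X = (-1)^{\op{dim} X + n} f^*\nu_Y = (-1)^{\op{dim} Y} f^*\nu_Y = f^*\widetilde\nu_Y,
\]
so $f$ is indeed a twisted Behrend morphism.

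The step requiring a bit of care is the dimension bookkeeping in the smooth-morphism verification: one must be aware that $\op{dim} X$ and $\op{dim} Y$ should be interpreted componentwise (or $X$ and $Y$ assumed equidimensional), so that $\op{dim} X = \op{dim} Y + n$ makes sense. Otherwise the argument is entirely formal and essentially a transcription of the preceding lemma.
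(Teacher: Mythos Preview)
Your proposal is correct and matches the paper's intent: the paper gives no explicit proof for this lemma, presenting it simply as the twisted modification of the preceding one, so your transcription of that earlier argument with $\nu$ replaced by $\widetilde\nu$ is exactly what is implied. Your additional verification that a smooth morphism satisfies $\widetilde\nu_X = f^*\widetilde\nu_Y$ (via the sign bookkeeping $\op{dim}X = \op{dim}Y + n$) is also correct and fills in a detail the paper leaves to the reader.
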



\section{Generalized Donaldson--Thomas type invariants of morphisms}
Mimicking the above definition of $\chi^{DT}(X \xrightarrow {f} Y)$ and ignoring the geometric or topological interpretation, we define the following.

\begin{defn} For a morphism $f:X \to Y$ and a constructible function $\delta_Y \in \m F(Y)$ we define
$$\chi^{\delta_Y}(X \xrightarrow {f} Y):= \chi(X, f^*\delta_Y).$$
\end{defn}

\begin{lem} For a morphism $f:X \to Y$ and a constructible function $\alp \in \m F(X)$ we have
$$\chi(X, \alp) = \chi(Y, f_*\alp).$$
\end{lem}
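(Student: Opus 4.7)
The plan is to reduce to indicator functions and then use a stratification of $Y$ to apply additivity and multiplicativity of the Euler characteristic.

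First, I would recall that the pushforward of constructible functions is defined fiberwise by $(f_*\alp)(y) := \chi(f^{-1}(y), \alp|_{f^{-1}(y)})$. Since both sides of the desired identity are $\bZ$-linear in $\alp$, and every constructible function $\alp \in \m F(X)$ admits a (non-unique) finite expression $\alp = \sum_j m_j \jeden_{W_j}$ for locally closed $W_j \subset X$, it suffices to establish the equality for $\alp = \jeden_W$ with $W \subset X$ a constructible subset. In that case $\chi(X,\jeden_W) = \chi(W)$, and $(f_*\jeden_W)(y) = \chi(W \cap f^{-1}(y))$, so we must show
$$\chi(W) = \chi\bigl(Y, \, y \mapsto \chi(W \cap f^{-1}(y))\bigr).$$

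Next I would stratify $Y$ into finitely many locally closed pieces $Y_1, \dots, Y_k$ such that the restriction $g_i : W \cap f^{-1}(Y_i) \to Y_i$ of $g := f|_W$ is a topologically (or, in characteristic $p$, an $\ell$-adically) locally trivial fibration whose fiber has constant Euler characteristic $e_i$. Such a stratification exists by standard constructibility/generic triviality results for morphisms of finite-type schemes together with the fact that the fiberwise Euler characteristic of a morphism is a constructible function on the target (see, e.g., \cite[Theorem 3.10]{Joyce}). On each stratum, multiplicativity of $\chi$ under locally trivial fibrations (which follows from Theorem \ref{basic}(2) applied locally, combined with additivity) gives
$$\chi\bigl(W \cap f^{-1}(Y_i)\bigr) = e_i \cdot \chi(Y_i).$$

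Finally, summing over strata and applying additivity (Theorem \ref{basic}(1)) on each side:
$$\chi(W) = \sum_{i=1}^k \chi\bigl(W \cap f^{-1}(Y_i)\bigr) = \sum_{i=1}^k e_i \, \chi(Y_i),$$
and since $f_*\jeden_W \equiv e_i$ on $Y_i$,
$$\chi(Y, f_*\jeden_W) = \sum_{i=1}^k e_i \, \chi(Y_i).$$
Comparing these two expressions yields the claim.

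The main obstacle is the stratification step: one must invoke the (non-trivial, but classical) fact that the fiberwise Euler characteristic of a morphism is constructible and can be computed from a stratification on which the fibration is locally trivial with constant-Euler-characteristic fibers. Once this is granted, the rest is a bookkeeping exercise in additivity and multiplicativity of $\chi$.
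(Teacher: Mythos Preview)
The paper does not actually prove this lemma; it is stated without argument, and the subsequent Remark only reformulates it as the identity ${\pi_X}_* = {\pi_Y}_* \circ f_*$ for the structure maps to a point, noting that this is the special case of functoriality of the constructible-function pushforward (and cautioning that full functoriality $(g\circ f)_* = g_*\circ f_*$ can fail in positive characteristic, cf.\ Sch\"urmann's example in \cite{Joyce}). Your argument is the standard direct proof and is correct.

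Two minor comments. First, the step ``multiplicativity of $\chi$ under locally trivial fibrations follows from Theorem~\ref{basic}(2) applied locally, combined with additivity'' is fine provided the local triviality is Zariski-local, so that one can cut the base into pieces over which the fibration is an honest product and then reassemble by additivity; generic triviality does give Zariski-local triviality on each stratum, so the argument goes through, but it is worth being explicit. Second, your acknowledgement that in positive characteristic one must appeal to the $\ell$-adic version of constructibility of the fiberwise Euler characteristic is exactly the subtlety the paper's Remark is warning about, so it is good that you flagged it.
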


\begin{cor} For a morphism $f:X \to Y$ and a constructible function $\delta_Y \in \m F(Y)$ we have
$$\chi(X, f^*\delta_Y) = \chi(Y, f_*f^*\delta_Y).$$
\end{cor}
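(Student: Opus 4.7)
The plan is to observe that this corollary is an immediate substitution into the preceding lemma; there is essentially no new content to prove beyond pointing out the right application. The preceding lemma asserts that for any $\alpha \in \mathcal{F}(X)$ and any morphism $f : X \to Y$, one has $\chi(X,\alpha) = \chi(Y, f_*\alpha)$. Since $f^*\delta_Y$ is itself a constructible function on $X$, we may simply take $\alpha := f^*\delta_Y$ and the conclusion $\chi(X, f^*\delta_Y) = \chi(Y, f_*f^*\delta_Y)$ follows at once.

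Given the triviality of this reduction, I would write no more than a single sentence of proof, of the form: ``Apply the preceding lemma to the constructible function $\alpha := f^*\delta_Y \in \mathcal{F}(X)$.'' It is not even necessary to expand the definition of $f_*$ or to verify anything about the compatibility of pullback and pushforward, since the lemma already handles an arbitrary $\alpha$.

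There is no real obstacle here; the only conceptual point worth noting (which is already embedded in the preceding lemma) is that the pushforward of constructible functions is defined precisely so that integration against Euler characteristic is preserved, i.e.\ $\chi(Y,f_*\alpha) = \sum_m m\,\chi\bigl(f_*\alpha^{-1}(m)\bigr)$ agrees with $\chi(X,\alpha)$ by fibering over $Y$ and using the additivity of $\chi$ on the constructible stratification induced by $\alpha$ and $f$. If desired, one could remark that, in the particular case $\alpha = f^*\delta_Y$, the fibers of $\alpha$ over $X$ are unions of $f$-pullbacks of the fibers of $\delta_Y$, making the statement almost tautological. But for the corollary as stated, a direct appeal to the lemma suffices.
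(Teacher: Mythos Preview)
Your proposal is correct and matches the paper's approach exactly: the paper states this as a corollary with no proof, since it follows immediately from the preceding lemma by substituting $\alpha := f^*\delta_Y$. There is nothing to add.
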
 

\begin{rem} For the constant map $\pi_X : X \to pt$, the pushforward homomorphism
$${\pi_X}_*:\m F(X) \to \m F(pt) = \bZ$$
is nothing but the fact that ${\pi_X}_*(\alp) = \chi(X, \alp)$ (by the definition of the pushforward).
Hence, the above equality $\chi(X, \alp) = \chi(Y, f_*\alp)$ is paraphrased as the commutativity of the following diagram:

$$\xymatrix{
\m F(X) \ar[dr]_ {{\pi_X}_*}\ar[rr]^ {f_*} && \m F(Y) \ar[dl]^{{\pi_Y}_*}\\
& \m F(pt)= \bZ.}$$
Namely, ${\pi_X}_* = (\pi_Y \circ f) _* = {\pi_Y} \circ f_*.$ This might suggest that $\m F(-)$ is a covariant functor, but we need to be a bit careful. In fact, $\m F(-)$ is certainly a covariant functor \emph{provided that the ground field $\frak K$ is of characteristic zero}. However, if it is not of characteristic zero, then it may happen that $(g \circ f)_* \not = g_* \circ f_*$, for which see Sch\"urmann's example in \cite{Joyce}.\\
\end{rem}

\begin{rem} If we define $\jeden_*:K_0(\m V/X) \to \m F(X)$ by $\jeden_*([V \xrightarrow h X]) := h_* \jeden_V$, then for a morphism $f:X \to Y$ we have the following commutative diagrams:
$$\xymatrix{
 K_0(\m V/X) \ar[d]_{\jeden_*} \ar[rr]^ {f_*} && K_0(\m V/Y) \ar[d]^{\jeden_*}\\
 \m F(X) \ar[dr]_ {{\pi_X}_*}\ar[rr]_ {f_*} && \m F(Y) \ar[dl]^{{\pi_Y}_*}\\
& \m F(pt)= \bZ.}$$
$({\pi_X}_* \circ \jeden_*)([V \xrightarrow h X]) = \chi([V \xrightarrow h X])$ and the outer triangle is nothing but the commutative diagram (\ref{diagram}) mentioned before.
\end{rem}

Here we 
emphasize that the above equality $\chi(X, f^*\delta_Y) = \chi(Y, f_*f^*\delta_Y)$ have the following two aspects:

\begin{itemize}
\item The invariant on LHS for a morphism $f:X \to Y$ is defined on the source space $X$.
\item The invariant on RHS for a morphism $f:X \to Y$ is defined on the target space $Y$.
\end{itemize}

So, in order to emphasize the difference, we introduce the following notation:
$$\chi^{\delta_Y}(X \xrightarrow f Y)^X:= \chi(X, f^*\delta_Y) = \chi(Y, f_*f^*\delta_Y) =:\chi^{\delta_Y}(X \xrightarrow f Y)_Y.$$

Since we want to deal with higher class versions of the Donaldson--Thomas type invariants and use the functoriality of the constructible function functor $\m F(-)$, we assume that the ground field $\frak K$ is of characteristic zero. We consider MacPherson's Chern class transformation $c_*:\m F(X) \to H_*^{BM}(X)$, which is due to Kennedy \cite{Kennedy}. 

For a morphism $h:V \to X$ and for a constructible function $\delta_X \in \m F(X)$ on the target space $X$, we have
$$\int_V c_*(h^*\delta_X) = \chi(V,h^*\delta_X) = \chi^{\delta_X}(V \xrightarrow h X)^V,$$
$$\int_X c_*(h_*h^*\delta_X) = \chi(X,h_*h^*\delta_X) = \chi^{\delta_X}(V \xrightarrow h X)_X.$$
Here $c_*(h^*\delta_X) \in H_*^{BM}(V)$ on the side of the source space $V$ and $c_*(h_*h^*\delta_X) \in H_*^{BM}(X)$ on the side of the target space $X$. Hence when we want to deal with them as the homomorphism from $K_0(\m V/X)$ to $H_*^{BM}(X)$, we should consider the higher analogues $c_*(h_*h^*\delta_X)$, which we denote by
$$c_*^{\delta_X}(V \xrightarrow h X) := c_*(h_*h^*\delta_X) \in H_*^{BM}(X).$$
On the other hand we denote
$$\overline{c_*^{\delta_X}}(V \xrightarrow h X) := c_*(h^*\delta_X) \in H_*^{BM}(V) .$$
Note that 
\begin{itemize}
\item $c_*^{\delta_X}(V \xrightarrow h X)  = h_*(\overline{c_*^{\delta_X}}(V \xrightarrow h X))$,
\item for an isomorphism $id_X: X \to X$, these two classes are identical and denoted simply by
$c_*^{\delta_X}(X):= c_*(\delta_X) = c_*^{\delta_X}(X \xrightarrow {id_X} X) = \overline{c_*^{\delta_X}}(X \xrightarrow {id_X} X).$ \\
\end{itemize}

In the following sections we treat these two objects $c_*^{\delta_X}(V \xrightarrow h X)$ and $\overline{c_*^{\delta_X}}(V \xrightarrow h X)$ separately, since they have different natures.

\section{Motivic Aluffi-type classes}

For the twisted Behrend function $\widetilde \nu_X$ the Chern class
$c_*^{\widetilde \nu_X}(X)$ is called the Aluffi class and denoted by $c_*^{A\ell}(X)$ (cf. \cite{Aluffi}). Note that
$\int_X c_*^{A\ell}(X) = (-1)^{\op {dim} X} \chi^{DT}(X)$. In \cite{Behrend} the untwisted one $c_*^{\nu_X}(X)$ is called the Aluffi class, in which case $\int_X c_*^{\nu_X}(X) = \chi^{DT}(X)$. But for the sake of later presentation, we stick to the twisted one. In this sense, the Chern class $c_*^{\delta_X}(V \xrightarrow h X)$ defined above is called a \emph{generalized Aluffi class of a morphism $h:V \to X$ associated to a constructible function $\delta_X \in \m F(X)$}. So the original Aluffi class is $c_*^{\widetilde \nu_X}(X \xrightarrow {\op{id}_X} X)$.

\begin{lem}The following formulae hold:
\begin{enumerate}
\item If $V \xrightarrow {h} X \cong V' \xrightarrow {h'} X$, i.e., there exists an isomorphism $k:V \xrightarrow {\cong} V'$ such that $h= h'\circ k$, then we have $c_*^{\delta_X}(V \xrightarrow {h} X)= c_*^{\delta_X}(V' \xrightarrow {h'} X)$.
\item For a closed subvariety $W \subset V$,
$$c_*^{\delta_X}(V \xrightarrow {h} X) = c_*^{\delta_X}(W \xrightarrow {h|_W} X) + c_*^{\delta_X}(V \setminus W \xrightarrow {h|_{V \setminus W}} X).$$
\item For morphisms $h_i: V_i \to X_i \, (i=1, 2)$,
$$c_*^{\delta_{X_1} \times \delta_{X_2}}(V_1 \times V_2  \xrightarrow {h_1 \times h_2} X_1 \times X_2)= c_*^{\delta_{X_1}}(V_1  \xrightarrow {h_1} X_1 ) \times c_*^{\delta_{X_2}}(V_2  \xrightarrow {h_2} X_2)_.$$ 
\item $c_*^{\delta_{pt}}(pt \to pt)= \delta_{pt}(pt) \in \bZ.$ 
\end{enumerate}
\end{lem}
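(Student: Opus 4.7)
All four formulas reduce to manipulating the defining expression $c_*^{\delta_X}(V \xrightarrow{h} X) = c_*(h_* h^* \delta_X)$ via elementary functorial properties of the constructible function functor $\m F(-)$ (valid since we assumed $\op{char} \frak K = 0$) together with three known features of MacPherson's transformation $c_*$: linearity, compatibility with proper pushforward, and multiplicativity on external products.

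For (1), the plan is a direct computation. Since $k$ is an isomorphism, $k_* \circ k^* = \op{id}$ on $\m F(V')$. Then
\[
h_* h^* \delta_X = (h' \circ k)_* (h' \circ k)^* \delta_X = h'_* k_* k^* (h')^* \delta_X = h'_* (h')^* \delta_X,
\]
and applying $c_*$ to both sides gives the claim.

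For (2), the key point is the decomposition of a constructible function across a closed-open pair. Writing $i:W \hookrightarrow V$ and $j:V\setminus W \hookrightarrow V$, every $\alpha \in \m F(V)$ satisfies $\alpha = i_* i^* \alpha + j_* j^* \alpha$ (the two summands being the extensions by zero of $\alpha|_W$ and $\alpha|_{V \setminus W}$). Taking $\alpha = h^* \delta_X$ and applying $h_*$ yields
\[
h_* h^* \delta_X = (h|_W)_* (h|_W)^* \delta_X + (h|_{V \setminus W})_* (h|_{V \setminus W})^* \delta_X,
\]
and the $\bZ$-linearity of $c_*$ finishes the argument. This is the step I'd expect to be the main source of confusion, purely because one must be careful about extension-by-zero conventions for open inclusions in $\m F(-)$ and their compatibility with $h_*$; once that identity is in hand, the rest is formal.

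For (3), the plan is to combine two external-product compatibilities. First, pullback and (proper) pushforward of constructible functions commute with external products: $(h_1 \times h_2)^*(\delta_{X_1} \boxtimes \delta_{X_2}) = h_1^*\delta_{X_1} \boxtimes h_2^*\delta_{X_2}$, and likewise for $(h_1\times h_2)_*$. Second, MacPherson's class satisfies $c_*(\alpha_1 \boxtimes \alpha_2) = c_*(\alpha_1) \times c_*(\alpha_2)$ in $H_*^{BM}$. Stringing these together gives (3). For (4), over a point $h^*\delta_{pt} = \delta_{pt} = \delta_{pt}(pt)\cdot \jeden_{pt}$, and $c_*(\jeden_{pt}) = [pt] \in H_0^{BM}(pt) = \bZ$, so $c_*^{\delta_{pt}}(pt\to pt) = \delta_{pt}(pt)$. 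None of these steps presents a real obstacle; the lemma is essentially a compatibility check that the constructions of the previous section behave like the usual motivic-Chern style invariants.
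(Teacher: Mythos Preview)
Your proof is correct and follows exactly the natural route one would expect. The paper itself states this lemma without proof, treating the four formulas as immediate consequences of the definition $c_*^{\delta_X}(V \xrightarrow{h} X) = c_*(h_*h^*\delta_X)$ together with the standard functorial properties of $\m F(-)$ and the linearity and multiplicativity of MacPherson's $c_*$; your write-up simply makes these routine verifications explicit.
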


\begin{cor}\label {corollary} Let $\delta_X \in \m F(X)$ be a constructible function. Then the following hold:
\begin{enumerate}
\item The map $c_*^{\delta_X}: K_0(\m V/X) \to H_*^{BM}(X)$ defined by
$$c_*^{\delta_X}([V \xrightarrow h X]) := c_*^{\delta_X}(V \xrightarrow h X) = c_*(h_*h^*\delta_X)$$
and linearly extended is a well-defined homomorphism.
\item $c_*^{\delta_X}$ commutes with the exterior product, i.e. for constructible functions $\delta_{X_i} \in \m F(X_i)$ and for $\alp_i \in K_0(\m V/X_i)$,
$$c_*^{\delta_{X_1} \times \delta_{X_2}}(\alp_1 \times \alp_2) = c_*^{\delta_{X_1}}(\alp_1) \times c_*^{\delta_{X_2}}(\alp_2).$$
\end{enumerate}
\end{cor}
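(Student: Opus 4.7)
The plan is to read off both statements directly from Lemma 4.2. Recall that the relative Grothendieck group $K_0(\m V/X)$ is the free abelian group on isomorphism classes $[V \xrightarrow{h} X]$ modulo the scissor relation
\[
[V \xrightarrow{h} X] = [W \xrightarrow{h|_W} X] + [V \setminus W \xrightarrow{h|_{V \setminus W}} X]
\]
for every closed subvariety $W \subset V$. The exterior product $\times \colon K_0(\m V/X_1) \otimes K_0(\m V/X_2) \to K_0(\m V/X_1 \times X_2)$ is defined on generators by $[V_1 \xrightarrow{h_1} X_1] \times [V_2 \xrightarrow{h_2} X_2] := [V_1 \times V_2 \xrightarrow{h_1 \times h_2} X_1 \times X_2]$.

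For part (1), I would first use Lemma 4.2(1) to conclude that the assignment $[V \xrightarrow{h} X] \mapsto c_*(h_* h^* \delta_X)$ depends only on the isomorphism class of $V \xrightarrow{h} X$, so that it extends unambiguously to a map on the free abelian group on isomorphism classes. Then, to see that this map descends to the quotient by the scissor relation, I would apply Lemma 4.2(2), which is precisely the statement that
\[
c_*^{\delta_X}(V \xrightarrow{h} X) = c_*^{\delta_X}(W \xrightarrow{h|_W} X) + c_*^{\delta_X}(V \setminus W \xrightarrow{h|_{V \setminus W}} X).
\]
Linear extension then yields the desired homomorphism $c_*^{\delta_X} \colon K_0(\m V/X) \to H_*^{BM}(X)$.

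For part (2), since $\times$ is bilinear on $K_0(\m V/X_1) \otimes K_0(\m V/X_2)$ and both sides of the claimed identity are linear in $\alpha_1$ and $\alpha_2$, it suffices to verify it on generators $\alpha_i = [V_i \xrightarrow{h_i} X_i]$. But this is exactly the content of Lemma 4.2(3). No further work is needed.

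I do not expect any serious obstacle; the corollary is a formal consequence of the lemma, and the content is entirely in checking that the four properties listed in Lemma 4.2 are precisely the universal properties defining a homomorphism out of $K_0(\m V/X)$ compatible with exterior product. The only minor subtlety worth flagging is that the well-definedness of $c_*$ on constructible functions (in particular Kennedy's result for ground fields of characteristic zero) is silently used when invoking Lemma 4.2; since we are already working under the standing assumption that $\frak K$ has characteristic zero, this is harmless.
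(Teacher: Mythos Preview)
Your proposal is correct and matches the paper's approach exactly: the paper states the corollary with no proof, treating it as an immediate formal consequence of the preceding lemma (which you cite as Lemma~4.2; in the paper's numbering it is Lemma~4.1), and your argument spells out precisely that deduction. The only cosmetic slip is the label number; the content and reasoning are right.
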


\begin{rem} If $\delta_X$ is some function well-defined on $X$ such as the characteristic function $\jeden_X$, the Behrend function $\nu_X$, the twisted Behrend function $\widetilde \nu_X$, and if it is multiplicative, i.e. $\delta_{X \times Y} = \delta_X \times \delta_Y$, then the above Corollary \ref{corollary} (2) can be simply rewritten as
$c_*^{\delta_{X_1 \times X_2}}(\alp_1 \times \alp_2) = c_*^{\delta_{X_1}}(\alp_1) \times c_*^{\delta_{X_2}}(\alp_2).$
\end{rem}

\begin{rem} If $X$ is smooth, then we have
$c_*^{A\ell}([V \xrightarrow h X]) = c_*(h_*h^*\nu_X) = h_*c_*(h^*\jeden_X) = h_*c_*(\jeden_V) =  h_*c_*^{SM}(V)$ is the pushforward of the Chern--Schwartz--MacPherson class of $V$, thus it depends on the morphism $h:V \to X$, although the degree zero part of it, i.e. the twisted Donaldson--Thomas type invariant is nothing but the Euler characteristic of $V$, thus it does not depend on the morphism at all. Therefore the higher class version is more subtle. 
\end{rem}

The part $h_*h^*\delta_X$ can be formulated as follows. Given a constructible function $\delta_X \in \m F(X)$, we define
$$[\delta_X]: K_0(\m V/X) \to \m F(X)$$
by $[\delta_X]([V \xrightarrow {h} X]) := h_*h^*\delta_X$ and extend it linearly. Note that $\jeden_*: K_0(\m V/X) \to \m F(X)$ is nothing but $[\jeden_X]: K_0(\m V/X) \to \m F(X)$. It is straightforward to see the following.

\begin{lem}\label{lemma4.5} For \emph{any} morphism $g:X \to Y$ and \emph{any} constructible function $\delta_{Y} \in \m F(Y)$, the following diagrams commute:
$$\begin{CD}
 K_0(\m V/X) @> [g^*\delta_{Y}] >> \m F(X) \\
@V g_* VV  @VV  g_*V \\
 K_0(\m V/Y) @>> [\delta_{Y}]  >  \m F(Y).
\end{CD} , \qquad  
\begin{CD}
 K_0(\m V/Y) @> [\delta_{Y}] >> \m F(Y) \\
@V g^* VV  @VV  g^*V \\
 K_0(\m V/X) @>> [g^*\delta_{Y}]  > \m F(X).
\end{CD} 
$$
\end{lem}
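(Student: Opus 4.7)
The plan is to check commutativity on a generator $[V\xrightarrow{h}X]$ (respectively $[W\xrightarrow{k}Y]$) of the relative Grothendieck group and then extend linearly. Both squares reduce to standard identities for constructible functions: the left one is purely functorial, while the right one is a base-change statement.

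For the left square, chasing $[V\xrightarrow{h}X]$ along the upper-right path gives
\[
g_*\bigl(h_*h^*g^*\delta_Y\bigr)=(g\circ h)_*(g\circ h)^*\delta_Y
\]
by functoriality of $_*$ and $^*$ on $\m F(-)$. The lower-left path sends $[V\xrightarrow{h}X]$ first to $[V\xrightarrow{g\circ h}Y]$ under $g_*$ on $K_0(\m V/-)$, and then $[\delta_Y]$ produces the same expression $(g\circ h)_*(g\circ h)^*\delta_Y$. Commutativity is immediate.

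For the right square I would form the fibre square
\[
\begin{CD}
W\times_Y X @>g'>> W \\
@Vk'VV @VVkV \\
X @>g>> Y
\end{CD}
\]
so that $g^*[W\xrightarrow{k}Y]=[W\times_Y X\xrightarrow{k'}X]$. The lower-left composite yields
\[
k'_*\,(k')^*g^*\delta_Y \;=\; k'_*\,(k\circ g')^*\delta_Y \;=\; k'_*\,(g')^*k^*\delta_Y,
\]
while the upper-right composite is $g^*k_*k^*\delta_Y$. Equality reduces to the base-change identity $g^*\circ k_* = k'_*\circ (g')^*$ for constructible functions, applied to $k^*\delta_Y\in\m F(W)$.

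The only non-formal step is this base-change formula for $\m F(-)$, and this is the part I would single out as the main (albeit mild) obstacle. I would verify it value-wise: $(g^*k_*k^*\delta_Y)(x)=(k_*k^*\delta_Y)(g(x))$ equals the weighted Euler characteristic of the fibre $k^{-1}(g(x))$ with weights $k^*\delta_Y$, and since $g'$ identifies $(k')^{-1}(x)$ with $k^{-1}(g(x))$ and pulls these weights back correctly under the fibre-product description, this coincides with $(k'_*(g')^*k^*\delta_Y)(x)$. The covariance of $\m F(-)$ invoked throughout is legitimate under the standing hypothesis that $\frak K$ has characteristic zero. Beyond this elementary fibre-wise check the argument is bookkeeping, which is why the authors describe the verification as straightforward.
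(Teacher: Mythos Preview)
Your argument is correct and matches what the paper intends: the authors give no proof beyond the remark ``It is straightforward to see the following,'' and the two ingredients you isolate---functoriality $(g\circ h)_*=g_*h_*$, $(g\circ h)^*=h^*g^*$ for the first square, and base change $g^*k_*=k'_*(g')^*$ for the second---are exactly the ones used elsewhere in the paper (the same base-change identity $\widetilde{h_S}_*\widetilde{h}^*=h^*(h_S)_*$ appears in the proof of Proposition~\ref{triangle}). Your pointwise verification of base change via the isomorphism $(k')^{-1}(x)\cong k^{-1}(g(x))$ is the standard one and is valid under the standing characteristic-zero assumption.
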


The following corollary follows from MacPherson's theorem \cite{Mac1} and our previous results \cite{Sch-VRR, Yokura-VRR}, and here we need the properness of the morphism $g: X \to Y$, since we deal with the pushforward homomorphism for the Borel--Moore homology. 
$c_*^{\delta_X}: K_0(\m V/X) \to H_*^{BM}(X)$ is the composite of $[\delta_X]: K_0(\m V/X) \to \m F(X)$ and MacPherson's Chern class $c_*$, in particular $c_*^{A\ell}:K_0(\m V/X) \to H_*^{BM}(X)$ is $c_*^{A\ell}=c_*\circ [\widetilde{\nu_X}].$ Hence we have the following corollary:
\begin{cor}\label{corollary4.6} 
\begin{enumerate}
\item For a proper morphism $g:X \to Y$ and \emph{any} constructible function $\delta_{Y} \in \m F(Y)$, the following diagram commutes:
$$\begin{CD}
 K_0(\m V/X) @> c_*^{g^*\delta_{Y}} >> H_*^{BM}(X)\\
@V g_* VV   @VV  g_*V \\
 K_0(\m V/Y)  @>> c_*^{\delta_{Y}} > H_*^{BM}(Y).
\end{CD} 
$$
\item For a smooth morphism $g:X \to Y$ with $c(T_g)$ being the total Chern cohomology class of the relative tangent bundle $T_g$ of the smooth morphism and $g^*: H_*^{BM}(Y) \to H_*^{BM}(X)$ the Gysin homomorphism (\cite[Example 19.2.1]{Fulton-Book}) , the following diagram commutes:
$$\begin{CD}
 K_0(\m V/Y) @> c_*^{\delta_{Y}}  >> H_*^{BM}(Y)\\
@V g^* VV  @VV c(T_g) \cap g^*V \\
 K_0(\m V/X) @>>  c_*^{g^*\delta_{Y}}  > H_*^{BM}(X).
\end{CD} 
$$
\end{enumerate}
\end{cor}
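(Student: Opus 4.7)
The plan is to exploit the factorization $c_*^{\delta_X} = c_* \circ [\delta_X]$ noted just before the statement, and reduce both diagrams to a pasting of two squares: one coming from Lemma \ref{lemma4.5}, which handles the constructible-function level, and one expressing the known naturality of MacPherson's Chern class transformation $c_*$ under proper pushforward (MacPherson's theorem \cite{Mac1}) or under smooth Gysin pullback (the Verdier--Riemann--Roch formula of \cite{Sch-VRR, Yokura-VRR}).

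For part (1), I would paste the first square of Lemma \ref{lemma4.5} on top of the square
$$\begin{CD}
\m F(X) @> c_* >> H_*^{BM}(X) \\
@V g_* VV @VV g_* V \\
\m F(Y) @>> c_* > H_*^{BM}(Y),
\end{CD}$$
whose commutativity is exactly MacPherson's covariant functoriality of $c_*$ for a proper morphism $g$. Since the top square reads $g_* \circ [g^*\delta_Y] = [\delta_Y] \circ g_*$, and the bottom reads $c_* \circ g_* = g_* \circ c_*$, composing along the left column yields $g_* \circ c_*^{g^*\delta_Y} = c_* \circ g_* \circ [g^*\delta_Y] = c_* \circ [\delta_Y] \circ g_* = c_*^{\delta_Y} \circ g_*$, which is exactly the required commutativity.

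For part (2), I would similarly stack the second square of Lemma \ref{lemma4.5} on top of the Verdier--Riemann--Roch square for a smooth morphism,
$$\begin{CD}
\m F(Y) @> c_* >> H_*^{BM}(Y) \\
@V g^* VV @VV c(T_g)\cap g^* V \\
\m F(X) @>> c_* > H_*^{BM}(X),
\end{CD}$$
whose commutativity is the content of the Verdier--Riemann--Roch formula $c_*(g^*\alpha)=c(T_g)\cap g^*c_*(\alpha)$ for $\alpha\in\m F(Y)$ and $g$ smooth. The top square gives $g^*\circ [\delta_Y]=[g^*\delta_Y]\circ g^*$ on $K_0(\m V/Y)$, and pasting with the bottom yields
$(c(T_g)\cap g^*)\circ c_*^{\delta_Y} = (c(T_g)\cap g^*)\circ c_*\circ [\delta_Y] = c_*\circ g^*\circ [\delta_Y] = c_*\circ [g^*\delta_Y]\circ g^* = c_*^{g^*\delta_Y}\circ g^*,$
as desired.

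The only point that requires care is that the vertical maps on the $K_0(\m V/-)$ level are well defined: $g_*$ in part (1) is simply composition with $g$, which preserves the scissor relations, and $g^*$ in part (2) is the fibre product with $g$, which is well defined on $K_0(\m V/-)$ because $g$ is flat (in particular smooth). Neither step is an obstacle; the real work is entirely subsumed into the two deep inputs (MacPherson's theorem and its smooth Verdier--Riemann--Roch refinement), and Lemma \ref{lemma4.5} is the elementary bookkeeping that bridges $K_0(\m V/-)$ with $\m F(-)$. Thus the proof is essentially a diagram-pasting exercise, with the technical content delegated to the cited results.
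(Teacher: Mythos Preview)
Your proposal is correct and follows essentially the same approach as the paper: the paper explicitly states that the corollary follows from the factorization $c_*^{\delta_X}=c_*\circ[\delta_X]$ together with MacPherson's theorem \cite{Mac1} for the pushforward square and the Verdier--Riemann--Roch results \cite{Sch-VRR, Yokura-VRR} for the smooth pullback square, pasted with Lemma~\ref{lemma4.5}. One minor remark: your caveat that $g^*$ on $K_0(\m V/-)$ requires flatness is unnecessary here, since in the paper (cf.\ Lemma~\ref{lemma4.5}) the pullback by fiber product is taken for an arbitrary morphism and still respects the scissor relations.
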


In particular we get the following theorem for the Aluffi class $c_*^{A\ell}: K_0(\m V/-) \to H_*^{BM}(-)$:

\begin{thm} For a smooth proper morphism $g:X \to Y$  the following diagrams commute:
$$\begin{CD}
 K_0(\m V/X) @> c_*^{A\ell} >> H_*^{BM}(X)\\
@V g_* VV   @VV  g_*V \\
 K_0(\m V/Y)  @>> c_*^{A\ell} > H_*^{BM}(Y), 
\end{CD} \quad
\begin{CD}
 K_0(\m V/Y) @> c_*^{A\ell}  >> H_*^{BM}(Y)\\
@V g^* VV  @VV c(T_g) \cap g^*V \\
 K_0(\m V/X) @>>  c_*^{A\ell}  > H_*^{BM}(X).
\end{CD} 
$$
They are respectively Grothendieck--Riemann--Roch type and a Verdier--Riemann--Roch type formulas.
\end{thm}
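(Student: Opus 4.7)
The plan is to derive the theorem as a direct specialization of Corollary \ref{corollary4.6} to the constructible function $\delta_Y = \widetilde{\nu}_Y$, so that the entire argument reduces to one compatibility: for a \emph{smooth} morphism $g: X \to Y$ one has $\widetilde{\nu}_X = g^* \widetilde{\nu}_Y$.

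First I would verify this pullback identity for the twisted Behrend function. The second theorem of Section~2 gives $\nu_X = (-1)^n g^* \nu_Y$ when $g$ is smooth of relative dimension $n = \op{dim} X - \op{dim} Y$. Multiplying both sides by $(-1)^{\op{dim} X}$ and observing that $\op{dim} X + n = 2\op{dim} X - \op{dim} Y$ has the parity of $\op{dim} Y$, one obtains
$$\widetilde{\nu}_X = (-1)^{\op{dim} X}\nu_X = (-1)^{\op{dim} X + n} g^* \nu_Y = (-1)^{\op{dim} Y} g^* \nu_Y = g^* \widetilde{\nu}_Y,$$
as the authors already note just before Lemma~\ref{lemma4.5}. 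This is exactly the reason the twist was introduced.

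With this identity in hand, the first diagram is Corollary \ref{corollary4.6}(1) applied with $\delta_Y = \widetilde{\nu}_Y$: the bottom transformation is $c_*^{\widetilde{\nu}_Y} = c_*^{A\ell}$ on $K_0(\m V/Y)$, while the top transformation $c_*^{g^*\widetilde{\nu}_Y}$ is, by the identity above, equal to $c_*^{\widetilde{\nu}_X} = c_*^{A\ell}$ on $K_0(\m V/X)$, so the asserted commutativity yields the Grothendieck--Riemann--Roch square. The second diagram is Corollary \ref{corollary4.6}(2) applied to the same $\delta_Y = \widetilde{\nu}_Y$, with $g^*\widetilde{\nu}_Y = \widetilde{\nu}_X$ again turning $c_*^{g^*\widetilde{\nu}_Y}$ on the bottom row into $c_*^{A\ell}$; what drops out is precisely the Verdier--Riemann--Roch statement.

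There is no substantive obstacle, since the theorem is essentially a packaging of Corollary \ref{corollary4.6}. The point worth emphasizing is that the smoothness hypothesis on $g$ plays two different roles: in the first diagram it is used only to secure $g^*\widetilde{\nu}_Y = \widetilde{\nu}_X$ (while properness provides the pushforward on Borel--Moore homology), whereas in the second diagram it is additionally needed to supply the Gysin morphism $g^*$ and the relative tangent bundle $T_g$. If one attempted the same statement with the untwisted Aluffi class $c_*^{\nu_X}$, the relative-dimension sign $(-1)^n$ would obstruct the top row of each square, which explains why it is essential to formulate everything with the twisted Behrend function $\widetilde{\nu}$.
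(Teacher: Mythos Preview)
Your proposal is correct and matches the paper's approach exactly: the theorem is stated as an immediate specialization of Corollary~\ref{corollary4.6} with $\delta_Y=\widetilde\nu_Y$, the key input being $g^*\widetilde\nu_Y=\widetilde\nu_X$ for smooth $g$. One small correction: the identity $g^*\widetilde\nu_Y=\widetilde\nu_X$ for smooth morphisms is recorded in the parenthetical remark inside Lemma~2.9, not just before Lemma~\ref{lemma4.5}.
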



\begin{rem}
 In the above theorem the smoothness of the morphism $g:X \to Y$ is crucial and the Aluffi class homomorphism $c_*^{Al}: K_0(\m V/X) \to H_*^{BM}(X)$ cannot be captured as a natural transformation in a full generality, i.e. natural for any morphism. Indeed, if it were the case, then $c_*^{Al}: K_0(\m V/-) \to H_*^{BM}(-) \hookrightarrow H_*^{BM}(-)\otimes \bQ$ becomes a natural transformation such that for any smooth variety $Y$ we have 
$$c_*^{A\ell}([X \xrightarrow {\op{id}_X} X]) = c(T_X) \cap [X].$$
Let ${T_y}_*: K_0(\m V/-) \to H_*^{BM}(-)\otimes \bQ[y]$ be the motivic Hirzebruch class transformation \cite{BSY}. Then it follows from \cite{BSY} that $c_*^{A\ell} = {T_{-1}}_*:K_0(\m V/-) \to H_*^{BM}(-) \hookrightarrow H_*^{BM}(-)\otimes \bQ$, thus for any variety $X$, singular or non-singular, we have
 $$c_*^{A\ell} ([X \xrightarrow {\op{id}_X} X]) = c_*^{SM}(X) = c_*(\jeden_X)$$
In particular $\int_X c_*(\jeden_X) = \chi(X)$ the topological Euler--Poincar\'e characteristic, which is a contradiction to the fact that 
 $$\int_X c_*^{A\ell} ([X \xrightarrow {\op{id}_X} X]) = (-1)^{\op{dim}X}\chi^{DT}(X).$$
\end{rem}
\begin{rem}
 In fact $c_*^{\jeden_X}$ is equal to the  motivic Chern class transformation ${T_{-1}}_*:K_0(\m V/X) \to H_*^{BM}(X) \hookrightarrow H_*^{BM}(X)\otimes \bQ.$\\\\
 \end{rem}

$K_0(\m V/X)$ is certainly a ring with the following fiber product
$$[V \xrightarrow h X]\cdot [W \xrightarrow k X] := [V \times _X W \xrightarrow {h \times_X k} X].$$
\begin{pro}
The operation $h_*h^*\delta_X$ of pullback followed by pushforward of a constructible function makes $\m F(X)$ a $K_0(\m V/X)$-module with the product $[V \xrightarrow h X]\cdot \delta_X:= h_*h^*\delta_X$. Namely, the following properties hold:
\begin{itemize}
\item $([V \xrightarrow h X] + [W \xrightarrow k X])\cdot \delta_X = [V \xrightarrow h X] \cdot \delta_X + [W \xrightarrow k X]\cdot \delta_X.$
\item  $([V \xrightarrow h X]\cdot [W \xrightarrow k X])\cdot \delta_X =  [V \xrightarrow h X]\cdot ([W \xrightarrow k X]\cdot \delta_X).$ 
\item $[V \xrightarrow h X] \cdot (\delta'_X + \delta''_X) = [V \xrightarrow h X] \cdot \delta'_X + [V \xrightarrow h X] \cdot\delta''_X.$
\end{itemize} 
\end{pro}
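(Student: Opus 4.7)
The plan is to verify that the assignment $\Phi_{\delta_X}: [V \xrightarrow h X] \mapsto h_*h^*\delta_X$ descends to a well-defined map on $K_0(\m V/X)$, and then to check the three listed axioms. The essential tools beyond bookkeeping are the $\bZ$-linearity and functoriality of $f_*$ and $f^*$ on $\m F(-)$ (available under the paper's standing assumption $\op{char}\frak K = 0$) together with the base change identity $h^*k_* = p_*q^*$ for constructible functions on the fiber square with projections $p:V\times_X W \to V$ and $q:V\times_X W\to W$.

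For well-definedness on $K_0(\m V/X)$, take a closed subvariety $Z \subset V$ with open complement $U$ and inclusions $i: Z \hookrightarrow V$, $j: U \hookrightarrow V$. Every constructible function $\alp$ on $V$ decomposes as $\alp = i_*i^*\alp + j_*j^*\alp$; applied to $\alp = h^*\delta_X$ and pushed forward by $h$ this yields
$$h_*h^*\delta_X = (h|_Z)_*(h|_Z)^*\delta_X + (h|_U)_*(h|_U)^*\delta_X,$$
which is precisely the scissor relation in $K_0(\m V/X)$. The first bullet then follows immediately because $[V\to X] + [W\to X]$ is represented by the disjoint union $[V\sqcup W\to X]$ and both $f_*$ and $f^*$ split over disjoint summands. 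The third bullet is the $\bZ$-linearity of $h_*$ and $h^*$ applied in turn to $\delta'_X + \delta''_X$.

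The core step is associativity (the second bullet). By definition $[V \xrightarrow h X]\cdot[W \xrightarrow k X] = [V\times_X W \xrightarrow{h\circ p} X]$ with $h\circ p = k\circ q$, so
$$([V\xrightarrow h X]\cdot[W\xrightarrow k X])\cdot\delta_X = (h\circ p)_*(h\circ p)^*\delta_X = h_*p_*p^*h^*\delta_X,$$
while on the other side
$$[V\xrightarrow h X]\cdot([W\xrightarrow k X]\cdot\delta_X) = h_*h^*k_*k^*\delta_X.$$
Substituting the base change identity $h^*k_* = p_*q^*$ and then $q^*k^* = (k\circ q)^* = (h\circ p)^* = p^*h^*$ rewrites the second expression as $h_*p_*p^*h^*\delta_X$, matching the first. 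The unit axiom $[X \xrightarrow{\op{id}_X} X]\cdot\delta_X = \delta_X$ is immediate from functoriality. The main obstacle is therefore only the base change formula for constructible functions; it is standard for separated schemes of finite type over an algebraically closed field of characteristic zero, which is the standing setting here, and is in fact already implicit in the covariant functoriality of $\m F(-)$ invoked earlier (see the remark following Lemma \ref{lemma4.5}).
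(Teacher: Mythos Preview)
Your proof is correct. The paper states this proposition without proof, so there is no argument to compare against directly; your write-up supplies exactly the details the authors omit. The linearity bullets are indeed bookkeeping, and your treatment of well-definedness via the scissor relation matches what the paper does explicitly for the simpler map $\chi^{DT}:K_0(\m V/X)\to\bZ$ in Lemma~2.5.

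The one substantive step is associativity, and your use of the base change identity $h^*k_* = p_*q^*$ on the fiber square is precisely the mechanism the paper itself invokes later: the proof of Proposition~\ref{triangle} hinges on the same identity (written there as $\widetilde{h_S}_*\widetilde h^* = h^*(h_S)_*$), and Remark~5.7 afterwards observes that the associativity $(\alpha\cdot\beta)\cdot\delta_X = \alpha\cdot(\beta\cdot\delta_X)$ is encoded in the commutativity of the $\Phi$/$\Psi$ diagram. So your argument is not merely consistent with the paper but anticipates the exact computation it carries out in \S5.

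One small correction: the remark you cite about covariant functoriality of $\m F(-)$ in characteristic zero is not the one following Lemma~\ref{lemma4.5}; it is Remark~3.4, which follows Corollary~3.3. This does not affect the mathematics.
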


Then the operation $h_*h^*\delta_X$ gives rise to a map $\Phi: K_0(\m V/X) \otimes \m F(X) \to \m F(X)$ and the composition $\Phi c_* := c_* \circ \Phi: K_0(\m V/X) \otimes \m F(X) \to H^{BM}_*(X)$ of $\Phi$ and MacPherson's Chern class transformation $c_*$ is a kind of extension of $c_*$. 

\begin{lem} For any morphism $g: X \to Y$ the following diagram commutes:
$$\begin{CD}
 K_0(\m V/Y) \otimes \m F(Y) @> \Phi >> \m F(Y)\\
@V g^* \otimes g^* VV  @VV g^*V \\
 K_0(\m V/X) \otimes \m F(X) @>>  \Phi > \m F(X).
\end{CD} 
$$
\end{lem}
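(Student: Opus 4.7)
The plan is to verify commutativity on generators and then extend by linearity. Take a generator $[W \xrightarrow{k} Y] \otimes \delta_Y \in K_0(\mathcal V/Y) \otimes \mathcal F(Y)$ and form the Cartesian square
$$\begin{CD}
W \times_Y X @> \widetilde g >> W \\
@V \widetilde k VV  @VV k V \\
X @>> g > Y,
\end{CD}$$
so that $g^*[W \xrightarrow{k} Y] = [W \times_Y X \xrightarrow{\widetilde k} X]$ in $K_0(\mathcal V/X)$. Chasing the element around the square, the composition along the top-right corner gives
$$g^*\bigl(\Phi([W \xrightarrow{k} Y] \otimes \delta_Y)\bigr) = g^*(k_* k^* \delta_Y),$$
while the composition along the bottom-left corner gives
$$\Phi\bigl(g^*[W \xrightarrow{k} Y] \otimes g^* \delta_Y\bigr) = \widetilde k_* \widetilde k^* g^* \delta_Y.$$
Using the obvious identity $g \circ \widetilde k = k \circ \widetilde g$, we have $\widetilde k^* g^* = \widetilde g^* k^*$, so the right-hand side reduces to $\widetilde k_* \widetilde g^* k^* \delta_Y$. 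Thus the whole lemma is equivalent to the base-change identity
$$g^* \circ k_* = \widetilde k_* \circ \widetilde g^* \colon \mathcal F(W) \longrightarrow \mathcal F(X).$$

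The main step is to establish this base-change identity for constructible functions on the Cartesian square. I would check it pointwise: for $\alpha \in \mathcal F(W)$ and $x \in X$, by definition of the pushforward,
$$(g^* k_* \alpha)(x) = (k_*\alpha)(g(x)) = \chi\bigl(k^{-1}(g(x)), \,\alpha|_{k^{-1}(g(x))}\bigr),$$
whereas
$$(\widetilde k_* \widetilde g^*\alpha)(x) = \chi\bigl(\widetilde k^{-1}(x), \,(\alpha \circ \widetilde g)|_{\widetilde k^{-1}(x)}\bigr).$$
The projection $\widetilde g$ restricts to a scheme-theoretic isomorphism $\widetilde k^{-1}(x) \xrightarrow{\cong} k^{-1}(g(x))$ (this is the defining property of the fibered product), and $\widetilde g^*\alpha$ on the fibre is identified under this isomorphism with $\alpha|_{k^{-1}(g(x))}$. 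Hence the two weighted Euler characteristics are equal.

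The technical obstacle is really just this base-change verification; once it is in hand, the diagram commutes on all simple tensors, and extending $\mathbb Z$-bilinearly through $K_0(\mathcal V/Y) \otimes \mathcal F(Y)$ finishes the argument. I note that the standing assumption (flagged earlier in the paper) that the ground field $\mathfrak K$ has characteristic zero is exactly what guarantees that $\mathcal F(-)$ is a genuine covariant functor under $(-)_*$, so that both $k_*$ and $\widetilde k_*$ behave properly and the pointwise identification above makes sense; in positive characteristic one would have to contend with the functoriality failure mentioned in the earlier remark on Schürmann's example.
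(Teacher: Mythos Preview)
The paper does not give a proof of this lemma; it is simply asserted. Your argument is correct and is exactly the natural one: reduce to the base-change identity $g^{*}k_{*}=\widetilde{k}_{*}\widetilde{g}^{*}$ for constructible functions on a Cartesian square, and verify this pointwise via the canonical identification of fibres $\widetilde{k}^{-1}(x)\cong k^{-1}(g(x))$. This is precisely the identity the paper itself invokes without justification in the proof of Proposition~\ref{triangle} (the line ``since $\widetilde{h_S}_{*}\widetilde{h}^{*}=h^{*}(h_S)_{*}$''), so your approach is fully in line with the paper's implicit reasoning.

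One small remark: your closing comment about characteristic zero is not strictly needed here. The potential failure in positive characteristic concerns the \emph{functoriality} of pushforward, $(g\circ f)_{*}=g_{*}\circ f_{*}$, whereas your pointwise verification of base change uses only the definition of a single pushforward together with the fibre isomorphism, and this holds in any characteristic. The characteristic-zero hypothesis matters elsewhere in the paper (e.g.\ for MacPherson's Chern class), but not for this particular diagram.
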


\begin{cor} For a smooth morphism $g:X \to Y$ the following diagram commutes:
$$\begin{CD}
 K_0(\m V/Y) \otimes \m F(Y) @> \Phi c_* >> H^{BM}_*(Y)\\
@V g^* \otimes g^* VV  @VV c(T_g) \cap g^*V \\
 K_0(\m V/X) \otimes \m F(X) @>>  \Phi c_* > H^{BM}_*(X).
\end{CD} 
$$
\end{cor}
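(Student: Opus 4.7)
The plan is to reduce the corollary to two already-established facts: the previous lemma (commutativity of $\Phi$ with $g^*\otimes g^*$), and the Verdier--Riemann--Roch formula for MacPherson's Chern class transformation under smooth morphisms. Since the vertical maps on both sides factor through the constructible-function functor, the strategy is to insert $\m F(-)$ as an intermediate object and chase the generator $[V \xrightarrow {h} Y] \otimes \delta_Y$ through the resulting two-level diagram.

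Concretely, I would first note that $\Phi c_* = c_* \circ \Phi$ by definition, so the corollary splits as the composition of two commutative squares:
$$\begin{CD}
 K_0(\m V/Y) \otimes \m F(Y) @> \Phi >> \m F(Y) @> c_* >> H^{BM}_*(Y)\\
@V g^* \otimes g^* VV  @VV g^* V  @VV c(T_g) \cap g^* V\\
 K_0(\m V/X) \otimes \m F(X) @>> \Phi >  \m F(X) @>> c_* >  H^{BM}_*(X).
\end{CD}$$
The left square is exactly the content of the preceding lemma. The right square is the Verdier--Riemann--Roch formula for $c_*$ with respect to the smooth morphism $g$, i.e.\ $c_*(g^*\alpha) = c(T_g) \cap g^* c_*(\alpha)$ for any constructible function $\alpha \in \m F(Y)$; this is the result of Sch\"urmann and Yokura cited in the construction of Corollary~\ref{corollary4.6}(2). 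Pasting the two squares yields the desired commutativity.

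For the sake of transparency I would also verify it by a direct chase on generators. Starting with $[V \xrightarrow {h} Y]\otimes \delta_Y$, going right then down one obtains $c(T_g) \cap g^* c_*(h_* h^* \delta_Y)$, which equals $c_*(g^*(h_* h^* \delta_Y))$ by VRR for $c_*$, and then equals $c_*(\Phi((g^*\otimes g^*)([V \xrightarrow {h} Y]\otimes\delta_Y)))$ by the previous lemma. Going down then right gives the same expression by definition of $\Phi c_*$. Linearity and bilinearity over $\bZ$ extend this from generators to all of $K_0(\m V/Y)\otimes \m F(Y)$.

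The only real subtlety is that the VRR statement for $c_*$ under smooth morphisms has to be invoked with $g^*$ understood as the constructible-function pullback on the left and as the Gysin homomorphism on the right; both are well-defined because $g$ is smooth, and the compatibility of the two via $c_*$ is precisely the content of the VRR theorem. The additive extension and compatibility of $\Phi$ with tensor decomposition are purely formal, so no further obstacles arise.
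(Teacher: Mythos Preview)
Your proposal is correct and follows exactly the approach the paper intends: the corollary is stated without proof precisely because it is the composition of the preceding lemma (the $\Phi$ square) with the Verdier--Riemann--Roch formula for $c_*$ under smooth morphisms (the $c_*$ square), as invoked already in Corollary~\ref{corollary4.6}(2). The remark immediately following the corollary in the paper even displays this factorization explicitly.
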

\begin{rem} Fix $\delta_Y \in \m F(Y)$, the composite of the inclusion homomorphism $i_{\delta_Y}: K_0(\m V/Y) \to K_0(\m V/Y)\otimes \m F(Y)$ defined by $i_{\delta_Y}(\alpha):= \alpha \otimes \delta_Y$ and the map $\Phi:K_0(\m V/Y)\otimes \m F(Y) \to \m F(Y)$ is the homomorphism $[\delta_Y]$; $\Phi \circ i_{\delta_Y}= [\delta_Y]: K_0(\m V/F) \to \m F(Y).$ The right-hand-sided commutative diagram in Lemma \ref{lemma4.5} is the outer square of the following commutative diagrams:
$$\begin{CD}
 K_0(\m V/Y)  @> i_{\delta_Y} >>K_0(\m V/Y) \otimes \m F(Y) @> \Phi  >> \m F(Y)\\
@V g^* VV @VV g^* \otimes g^* V  @VV g^*V \\
 K_0(\m V/X) @>>  i_{g^*\delta_Y}  > K_0(\m V/X) \otimes \m F(X) @>>  \Phi > \m F(X) .
\end{CD} 
$$
Furthermore, if $g:X \to Y$ is smooth, we get the following commutative diagrams:
$$\begin{CD}
 K_0(\m V/Y)  @> i_{\delta_Y} >>K_0(\m V/Y) \otimes \m F(Y) @> \Phi  >> \m F(Y)@> c_*>> H_*^{BM}(Y)\\
@V g^* VV @VV g^* \otimes g^* V  @VV g^*V @VV c(T_g) \cap g^*V \\
 K_0(\m V/X) @>>  i_{g^*\delta_Y}  > K_0(\m V/X) \otimes \m F(X) @>>  \Phi > \m F(X) @>>  c_*> H^{BM}_*(X),
\end{CD} 
$$
the outer square of which is the commutative diagram in Corollary \ref{corollary4.6} (2).

\end{rem}
\begin{rem} As to the pushforward we do knot know if there is a reasonable pushforward $?:K_0(\m V/X) \otimes \m F(X) \to K_0(\m V/Y) \otimes \m F(Y)$ such that the following diagram commutes:
$$\begin{CD}
 K_0(\m V/X) \otimes \m F(X) @> \Phi >> \m F(X)\\
@V ? VV  @VV g_*V \\
 K_0(\m V/Y) \otimes \m F(Y) @>>  \Phi > \m F(Y).
\end{CD} 
$$
At the moment we can see only that the following diagrams commute:
$$\xymatrix{
 K_0(\m V/X)  \ar[d]_{g_*}\ar[r]^{i_{g^*{\delta_Y}}\qquad } & K_0(\m V/X) \otimes \m F(X) \ar[r]^{\qquad \Phi}  & \m F(X) \ar[d]^{g_*}  \ar[r]^{c_*} & H_*^{BM}(X) \ar[d]^{g_*} \\
 K_0(\m V/Y) \ar[r]_{i_{\delta_Y} \qquad }  &  K_0(\m V/Y) \otimes \m F(Y) \ar[r]_{\qquad \Phi} & \m F(Y) \ar[r]_{c_*} & H^{BM}_*(Y)}
 $$
\end{rem}


\section{Naive Motivic Donaldson--Thomas type Hirzebruch classes}

In this section we give a further generalization of the above generalized Aluffi class $c_*^{\delta}(X)$, using the motivic Hirzebruch class transformation ${T_y}_*:K_0(\m V/-) \to H_*^{BM}(-)\otimes \bQ[y]$.

In the above argument, a key part is the operation of \emph{ pullback-followed-by-pushforward $h_*h^*$} for a morphism $h:V \to X$ on a fixed or chosen constructible function $\delta_X$ of the target space $X$.  It is quite natural to do the same operation on $K_0(\m V/X)$ itself. For that purpose we need to define a motivic element $\delta_X^{mot} \in K_0(\m V/X)$ corresponding to the constructible function $\delta_X$; in particular we need to define a reasonable motivic element $\nu_X^{mot}\in K_0(\m V/X)$ corresponding to the Behrend function $\nu_X \in \m F(X)$. 

By considering the isomorphism
$\jeden:\m Z(X) \xrightarrow {\cong} \m F(X), \, \jeden \left(\sum_V n_V [V] \right) := \sum_V n_V \jeden_V$, we define another distinguished integral cycle:
$\frak D_X := {\jeden}^{-1}(\nu_X)  \, \left (= \jeden^{-1} \circ \op{Eu} (\frak C_X)\right ).$
Then we set $\nu_X^{mot}:= [\frak D_X \to X].$ This can be put in as follows. Let $\frak s:\m F(X) \to K_0(\m V/X)$ be the section of $\jeden_*: K_0(\m V/X) \to \m F(X)$ defined by $\frak s(\jeden_S):= [S \hookrightarrow X]$. Then 
$\nu_X^{mot} = \frak s(\nu_X)$. Another way is $\nu_X^{mot}:= \sum_n n[\nu_X^{-1}(n) \hookrightarrow X]$ (see \cite{Davison}).

\begin{rem} Obviously the homomorphism $[\jeden_X] = \jeden_*: K_0(\m V/X) \to \m F(X)$ is not injective and its kernel is infinite.
In the case when $X$ is the critical set of a regular function $f: M \to \bC$, then there is a notion of ``motivic element" (which is called the ``motivic Donaldson--Thomas invariant") corresponding to the Behrend function (which is in this case described via the Milnor fiber), using the motivic Milnor fiber, due to Denef--Loeser. In our general case, we do not have such a sophisticated machinery available, thus it seems to be natural to define a motivic element $\nu_X^{mot}$ naively as above.
\end{rem}

Let $\Psi: K_0(\m V/X) \otimes K_0(\m V/X) \to K_0(\m V/X)$ be the fiber product mentioned before:
$$\Psi \left ([V \xrightarrow {h} X ] \otimes [W \xrightarrow k X] \right):= [V \xrightarrow {h} X ]\cdot [W \xrightarrow k X] = [V \times_X W \xrightarrow {h \times _X k} X].$$
Since $[\delta_X] = \Phi \circ i_{\delta_X}:K_0(\m V/X) \xrightarrow {i_{\delta_X}} K_0(\m V/X)\otimes \m F(X) \xrightarrow {\Phi} \m F(X)$ with $\delta_X \in \m F(X)$, we consider its ``motivic" analogue, which means the following homomorphism
$$[\gamma_X]: K_0(\m V/X) \xrightarrow {i_{\gamma_X}} K_0(\m V/X)\otimes K_0(\m V/X) \xrightarrow {\Psi} K_0(\m V/X),$$
where $\gamma_X \in K_0(\m V/X)$ and $i_{\gamma_X}:K_0(\m V/X) \to K_0(\m V/X)\otimes K_0(\m V/X)$ is defined by $i_{\gamma_X}(\alpha):= \alpha \otimes \gamma_X.$

\begin{pro}\label{triangle} Let $\gamma_X \in K_0(\m V/X)$. Then the following diagram commutes:
$$\xymatrix{
 K_0(\m V/X) \ar[dr]_ {[\jeden_*(\gamma_X)]}\ar[rr]^ {[\gamma_X]} && K_0(\m V/X) \ar[dl]^{\jeden_*}\\
& \m F(X).}$$
\end{pro}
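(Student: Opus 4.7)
The plan is to verify the identity on generators and then extend by linearity. Write $\gamma_X = [W \xrightarrow{k} X]$ (it suffices to check the equality on such generators, since both $[\gamma_X]$ and $[\jeden_*(\gamma_X)]$ are linear in $\gamma_X$; similarly one reduces to a generator $[V \xrightarrow{h} X]$). Unraveling the two sides:
\begin{align*}
\jeden_* \circ [\gamma_X]\bigl([V \xrightarrow{h} X]\bigr) &= \jeden_*\bigl([V \times_X W \xrightarrow{h \times_X k} X]\bigr) = (h \times_X k)_* \jeden_{V \times_X W}, \\
[\jeden_*(\gamma_X)]\bigl([V \xrightarrow{h} X]\bigr) &= h_* h^*\bigl(k_* \jeden_W\bigr).
\end{align*}
So it suffices to prove
$$(h \times_X k)_* \jeden_{V \times_X W} = h_* h^* k_* \jeden_W.$$

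The key step is the base-change formula for the constructible function functor. Form the fiber square
$$\begin{CD}
V \times_X W @>{\widetilde k}>> V \\
@V{\widetilde h}VV @VV{h}V \\
W @>{k}>> X
\end{CD}$$
so that $h \times_X k = h \circ \widetilde k = k \circ \widetilde h$. Since $\mathfrak{K}$ has characteristic zero (assumed in Section 4), $\mathcal{F}(-)$ is a covariant functor and satisfies base change: $h^* k_* = \widetilde k_* \widetilde h^*$. Applying this to $\jeden_W$ and using $\widetilde h^* \jeden_W = \jeden_{V \times_X W}$, we get
$$h^* k_* \jeden_W \;=\; \widetilde k_* \widetilde h^* \jeden_W \;=\; \widetilde k_* \jeden_{V \times_X W}.$$
Pushing forward by $h_*$ and using functoriality $h_* \widetilde k_* = (h \circ \widetilde k)_* = (h \times_X k)_*$ then yields the desired equality.

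The only subtle point, and the main obstacle worth flagging, is the appeal to base change for constructible functions, which genuinely requires characteristic zero (the author highlights in an earlier remark that $(g \circ f)_* \neq g_* \circ f_*$ can fail in positive characteristic, via Sch\"urmann's example). Once base change is granted, the computation is a direct verification on generators, and linearity extends it to all of $K_0(\mathcal{V}/X)$.
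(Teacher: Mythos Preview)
Your proof is correct and follows essentially the same route as the paper: reduce to generators, form the fiber square, and apply the base-change identity $h^*k_* = \widetilde{k}_*\widetilde{h}^*$ for constructible functions together with functoriality of pushforward. The paper's argument is line-for-line the same computation with different letters (it uses $S$, $h_S$ in place of your $W$, $k$), and your remark about the characteristic-zero hypothesis being needed for functoriality of $\mathcal F(-)$ is well placed.
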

\begin{proof} Let $\ga_X := [S \xrightarrow {h_S} X]$. Then it suffices to show the following
$$\left (\jeden_* \circ \left [[S \xrightarrow {h_S} X] \right ] \right ) ([V \xrightarrow h X]) = \left [\jeden_* \left ([S \xrightarrow {h_S} X] \right) \right ]([V \xrightarrow h X]).$$
This can be proved using the fiber square \,\, 
$\begin{CD}
V \times _X S @> {\widetilde h} >> S\\
@V {\widetilde {h_S}} VV   @VV  h_S V \\
V  @>> h > X. 
\end{CD} 
$ 

\begin{align*}
\left (\jeden_* \circ \left [[S \xrightarrow {h_S} X] \right ] \right ) ([V \xrightarrow h X]) &= \jeden_* \left (\left [[S \xrightarrow {h_S} X] \right ] ([V \xrightarrow h X]) \right ) \\
& = \jeden_* ([V \times_ X S \xrightarrow {h \circ \widetilde {h_S}} X ]) \\
& = (h \circ \widetilde {h_S})_* \jeden_{V \times_ X S} \,\, \, \text {(by the definition of $\jeden_*$)}\\
& = h_* \widetilde {h_S}_* \jeden_{V \times_ X S}\\
& = h_* \widetilde {h_S}_* \widetilde h^* \jeden_S\\
& = h_* h^* (h_S)_* \jeden_S \quad \text {(since $\widetilde {h_S}_* \widetilde h^* =h^* (h_S)_*$)} \\
& = h_* h^* \left (\jeden_* ([S \xrightarrow {h_S} X]) \right )\\
& = \left [\jeden_* \left ([S \xrightarrow {h_S} X] \right) \right ]([V \xrightarrow h X]).
\end{align*}
\end{proof}

\begin{cor} 
\begin{enumerate}
\item Let $\delta_X \in \m F(X)$ and let $\delta^{mot}_X\in K_0(\m V/X)$ be such that $\jeden_*(\delta^{mot}_X) = \delta_X$. Then we have
$$\xymatrix{
 K_0(\m V/X) \ar[dr]_ {[\delta_X ]}\ar[rr]^ {[\gamma_X]} && K_0(\m V/X) \ar[dl]^{\jeden_*}\\
& \m F(X).}$$
The motivic element $\delta^{mot}_X$ is called a naive motivic analogue of $\delta_X$.

\item In particular, we have
$$\xymatrix{
 K_0(\m V/X) \ar[dr]_ {[\nu_X]}\ar[rr]^ {[\nu_X^{mot}]} && K_0(\m V/X) \ar[dl]^{\jeden_*}\\
& \m F(X).}$$
\end{enumerate}
\end{cor}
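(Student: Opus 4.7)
The plan is to derive both parts of the corollary as direct specializations of Proposition \ref{triangle}, doing essentially no new work beyond identifying the right substitutions.

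For part (1), I would set $\gamma_X := \delta^{mot}_X$ in Proposition \ref{triangle}. Since the hypothesis gives $\jeden_*(\delta^{mot}_X) = \delta_X$, the slanted arrow labeled $[\jeden_*(\gamma_X)]$ in the proposition becomes exactly $[\delta_X]$, while the horizontal arrow $[\gamma_X]$ becomes $[\delta^{mot}_X]$ (this appears to be what the diagram in the statement is asserting, modulo what looks like a typo in the top arrow label). The commutativity of the triangle in Proposition \ref{triangle} then reads
$$\jeden_* \circ [\delta^{mot}_X] \;=\; [\delta_X] \colon K_0(\m V/X) \longrightarrow \m F(X),$$
which is exactly the assertion.

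For part (2), I would specialize part (1) to the case $\delta_X = \nu_X$ and $\delta^{mot}_X = \nu_X^{mot}$. The only thing needing verification is that $\nu_X^{mot}$ is indeed a naive motivic lift of $\nu_X$, i.e.\ $\jeden_*(\nu_X^{mot}) = \nu_X$. Using the alternative description $\nu_X^{mot} = \sum_n n [\nu_X^{-1}(n) \hookrightarrow X]$, this is immediate from the definition of $\jeden_*$:
$$\jeden_*(\nu_X^{mot}) \;=\; \sum_n n \,\jeden_{\nu_X^{-1}(n)} \;=\; \nu_X.$$
Alternatively, one can note that $\nu_X^{mot} = \frak s(\nu_X)$ where $\frak s$ was introduced as a section of $\jeden_*$, making the relation $\jeden_* \circ \frak s = \op{id}$ tautological. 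Part (1) then immediately yields $\jeden_* \circ [\nu_X^{mot}] = [\nu_X]$.

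The main obstacle here is essentially nil: the substance of the matter is Proposition \ref{triangle}, whose proof was already carried out via the base-change identity $\widetilde{h_S}_* \widetilde h^* = h^* (h_S)_*$ on constructible functions. The corollary is a pure book-keeping consequence, and the one ``content" point — that the naive motivic Behrend element satisfies $\jeden_*(\nu_X^{mot}) = \nu_X$ — is built into the very definition of $\nu_X^{mot}$ as either $\frak s(\nu_X)$ or $\sum_n n[\nu_X^{-1}(n) \hookrightarrow X]$.
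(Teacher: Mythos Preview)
Your proposal is correct and matches the paper's approach: the corollary is stated without proof precisely because it is an immediate specialization of Proposition~\ref{triangle}, and you have identified the substitutions exactly as intended (including correctly spotting that the top arrow in part~(1) should read $[\delta^{mot}_X]$). Your verification that $\jeden_*(\nu_X^{mot}) = \nu_X$ via either description of $\nu_X^{mot}$ is also the intended reason part~(2) follows from part~(1).
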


\begin{rem}Here we emphasize that the following diagrams commutes:
$$\xymatrix{
 K_0(\m V/X) \ar[dr]_ {[\nu_X]}\ar[rr]^ {[\nu_X^{mot}]} && K_0(\m V/X) \ar[dl]^{\jeden_*}\ar[dr]^{{T_{-1}}_*}\\
& \m F(X)\ar[rr]_{c_* \otimes \bQ} && H_*^{BM}(X) \otimes \bQ.}$$
Thus, modulo the torsion and the choices of motivic elements
$\nu^{mot}_X$, the composite ${T_{-1}}_* \circ [\nu^{mot}_X]$
is a higher class analogue of the Donaldson--Thomas type invariant. Thus it would be natural or reasonable to generalize the Donaldson--Thomas type 
invariant using the motivic Hirzebruch class ${T_y}_*$.\\
\end{rem}

Let $\gamma_X \in K_0(\m V/X), \gamma_Y \in K_0(\m V/Y)$. Then for any morphism $g:X \to Y$
the following diagrams commute:

$$
\begin{CD}
 K_0(\m V/X) @> [\gamma_X]>> K_0(\m V/X)\\
@V g_* VV   @VV  g_*V \\
 K_0(\m V/Y)  @>> [g_*\gamma_X] > K_0(\m V/Y), 
\end{CD} \,\,  \text {or} \, 
\begin{CD}
 K_0(\m V/X) @> i_{\gamma_X}>> K_0(\m V/X) \otimes K_0(\m V/X) @> \Psi >>K_0(\m V/X)\\
@V g_* VV   @VV  g_* \otimes g_*V  @VV  g_*V\\
 K_0(\m V/Y)  @>> i_{g_*\gamma_X}> K_0(\m V/Y) \otimes K_0(\m V/Y) @>> \Psi > K_0(\m V/Y)
\end{CD} 
$$
$$
\begin{CD}
 K_0(\m V/X) @> [g^*\gamma_Y]>> K_0(\m V/X)\\
@V g_* VV   @VV  g_*V \\
 K_0(\m V/Y)  @>> [\gamma_Y] > K_0(\m V/Y), 
\end{CD} \,\,  \text {or} \, 
\begin{CD}
 K_0(\m V/X) @> i_{g^*\gamma_Y}>> K_0(\m V/X) \otimes K_0(\m V/X) @> \Psi >>K_0(\m V/X)\\
@V g_* VV   @VV  g_* \otimes g_*V  @VV  g_*V\\
 K_0(\m V/Y)  @>> i_{\gamma_Y}> K_0(\m V/Y) \otimes K_0(\m V/Y) @>> \Psi > K_0(\m V/Y)
\end{CD}
$$
$$
\begin{CD}
 K_0(\m V/Y) @> [\gamma_Y]   >> K_0(\m V/Y)\\
@V g^* VV  @VV g^*V \\
 K_0(\m V/X) @>>  [g^*\gamma_Y]  > K_0(\m V/X).
\end{CD} \,\,  \text {or} \, 
\begin{CD}
 K_0(\m V/Y) @> i_{\gamma_Y}>> K_0(\m V/Y) \otimes K_0(\m V/Y) @> \Psi >>K_0(\m V/Y)\\
@V g^* VV   @VV  g^* \otimes g^*V  @VV  g^*V\\
 K_0(\m V/Y)  @>> i_{g^*\gamma_Y}> K_0(\m V/X) \otimes K_0(\m V/X) @>> \Psi > K_0(\m V/X)
\end{CD} 
$$
Hence we get the following corollary

\begin{cor}
\begin{enumerate}
\item Let $\gamma_X \in K_0(\m V/X),\gamma_Y \in K_0(\m V/Y)$. For a proper morphism $g: X \to Y$ the following diagrams commute:
$$\begin{CD}
 K_0(\m V/X) @> {T_y}_*\circ \,  [\gamma_X] >> H_*^{BM}(X)\otimes \bQ[y]\\
@V g_* VV @VV  g_*V \\
 K_0(\m V/Y) @>> {T_y}_*\circ \, [g_*\gamma_X]  > H_*^{BM}(Y)\otimes \bQ[y],
\end{CD}  \quad
\begin{CD}
 K_0(\m V/X) @> {T_y}_*\circ \,  [g^*\gamma_Y] >> H_*^{BM}(X)\otimes \bQ[y]\\
@V g_* VV @VV  g_*V \\
 K_0(\m V/Y) @>> {T_y}_*\circ \, [\gamma_Y]  > H_*^{BM}(Y)\otimes \bQ[y],
\end{CD}
$$

\item
For a proper smooth morphism $g:X \to Y$ and for $\gamma_Y \in K_0(\m V/Y)$ the following diagrams are commutative:
$$\begin{CD}
K_0(\m V/Y) @> {T_y}_*\circ \,  [\gamma_Y] >> H_*^{BM}(Y)\otimes \bQ[y]\\
@V g^* VV @VV  td_y(T_g) \cap g^*V \\
 K_0(\m V/X) @>> {T_y}_*\circ \, [g^*\gamma_Y]  > H_*^{BM}(X)\otimes \bQ[y].
\end{CD} 
$$

\item Let $\widetilde {\nu}_X^{mot} := (-1)^{\op {dim} X} \nu_X^{mot}$, the twisted one. Let ${T_y}_*^{DT}:= {T_y}_*\circ [\widetilde {\nu}_X^{mot}]$. For a proper smooth morphism $g:X \to Y$ the following diagrams are commutative:

$$\begin{CD}
 K_0(\m V/X) @> {T_y}_*^{DT}>> H_*^{BM}(X)\otimes \bQ[y]\\
@V g_* VV @VV  g_*V \\
 K_0(\m V/Y) @>> {T_y}_*^{DT}  > H_*^{BM}(Y)\otimes \bQ[y],
\end{CD}  \quad
\begin{CD}
K_0(\m V/Y) @> {T_y}_*^{DT} >> H_*^{BM}(Y)\otimes \bQ[y]\\
@V g^* VV @VV  td_y(T_g) \cap g^*V \\
 K_0(\m V/X) @>> {T_y}_*^{DT} > H_*^{BM}(X)\otimes \bQ[y].
\end{CD} 
$$
\end{enumerate}
\end{cor}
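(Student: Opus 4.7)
The plan is to assemble the corollary's squares by pasting the commutative diagrams for the operation $[\ga]$ that are displayed immediately above the corollary with the two fundamental functorial properties of the motivic Hirzebruch class transformation ${T_y}_*:K_0(\m V/-) \to H_*^{BM}(-)\otimes \bQ[y]$ established in \cite{BSY}: (a) naturality under proper pushforward, $g_*\circ {T_y}_* = {T_y}_*\circ g_*$ whenever $g$ is proper, and (b) the Verdier--Riemann--Roch formula for smooth morphisms, ${T_y}_*\circ g^* = (td_y(T_g)\cap g^*)\circ {T_y}_*$.

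For part (1), the two preceding squares yield the Grothendieck-group identities $[g_*\ga_X]\circ g_* = g_*\circ [\ga_X]$ and $[\ga_Y]\circ g_* = g_*\circ [g^*\ga_Y]$. Postcomposing each with ${T_y}_*$ on the left and invoking (a) on the outer $g_*$ factor produces precisely the two asserted commutative squares at the level of Borel--Moore homology.

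For part (2), I paste the pullback square $g^*\circ[\ga_Y] = [g^*\ga_Y]\circ g^*$ (which appears immediately before the corollary) with the Verdier--Riemann--Roch identity (b). Chasing the rectangle gives
\[
(td_y(T_g)\cap g^*)\circ {T_y}_*\circ [\ga_Y] \;=\; {T_y}_*\circ g^*\circ [\ga_Y] \;=\; {T_y}_*\circ [g^*\ga_Y]\circ g^*,
\]
which is exactly the stated commutativity.

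Part (3) is the specialization $\ga_X = \widetilde\nu_X^{mot}$, $\ga_Y = \widetilde\nu_Y^{mot}$ of parts (1) and (2), but one must check that the naive motivic lift $\widetilde\nu^{mot}$ is compatible with smooth pullback, i.e.\ that $g^*\widetilde\nu_Y^{mot} = \widetilde\nu_X^{mot}$ in $K_0(\m V/X)$ for $g:X\to Y$ smooth. At the constructible-function level this follows from Theorem 2.2(3) together with the sign computation $(-1)^{\dim X}\nu_X = (-1)^{\dim X+\op{reldim} g}g^*\nu_Y = (-1)^{\dim Y}g^*\nu_Y$, giving $\widetilde\nu_X = g^*\widetilde\nu_Y$. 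The lift is then obtained via the isomorphism $\jeden:\m Z(-)\xrightarrow{\cong} \m F(-)$: for smooth $g$ the pullback of a reduced closed subvariety $V\subset Y$ is the reduced subscheme $g^{-1}(V)\subset X$ with $g^*\jeden_V = \jeden_{g^{-1}(V)}$, so the section $\frak s$ intertwines smooth pullback on cycles and on constructible functions, whence $g^*\widetilde\nu_Y^{mot}=\widetilde\nu_X^{mot}$. I expect this cycle-level compatibility of the non-canonical motivic choices $\widetilde\nu^{mot}$ under smooth pullback to be the main technical point requiring care; once it is granted, substitution into parts (1) (second square, with $\ga_Y = \widetilde\nu_Y^{mot}$) and (2) yields both diagrams of part (3).
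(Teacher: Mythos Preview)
Your proposal is correct and mirrors the paper's own (implicit) argument: the paper simply writes ``Hence we get the following corollary'' after displaying the three $[\gamma]$--squares, so the intended proof is exactly the pasting you describe --- those squares composed with the proper--pushforward naturality and the Verdier--Riemann--Roch formula for ${T_y}_*$ from \cite{BSY}. Your identification of the one nontrivial point in part~(3), namely $g^*\widetilde\nu_Y^{mot}=\widetilde\nu_X^{mot}$ for smooth $g$, is apt (the paper leaves it tacit); it drops out most cleanly from the level-set description $\widetilde\nu_X^{mot}=\sum_m m[\widetilde\nu_X^{-1}(m)\hookrightarrow X]$ together with $\widetilde\nu_X=g^*\widetilde\nu_Y$, and your $\frak s$--argument is an equivalent way to see it.
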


\begin{rem} The commutative diagram in Proposition \ref{triangle} can be described in more details as follows:
 $$\xymatrix{
  K_0(\m V/X) \ar[r]^{i_{\gamma_X}\qquad } & K_0(\m V/X) \otimes K_0(\m V/X) \ar[r]^{\qquad \Psi} \ar[d]_{id \otimes i_{\jeden_X}}  & K_0(\m V/X) \ar[d]^{i_{\jeden_X}} \\
& K_0(\m V/X) \otimes K_0(\m V/X) \otimes \m F(X) \ar[r]^{\qquad \Psi \otimes id} \ar[d]_{id \otimes \Phi}  & K_0(\m V/X)  \otimes \m F(X) \ar[d]^{\Phi} \\
& K_0(\m V/X)\otimes \m F(X)  \ar[r]_{\qquad \Phi }  & \m F(X)}
 $$
If we denote $\Phi(\alpha\otimes \delta_X)$ simply by $\alpha \cdot \delta_X$, then the bottom square on the right-hand-side commutative diagrams means that $(\alpha \cdot \beta )\cdot \delta_X = \alpha \cdot (\beta \cdot \delta_X)$, i.e. the associativity. 
\end{rem} 

\begin{rem} We remark that the following diagrams commute:
\begin{enumerate}
\item for a proper marphism $g:X \to Y$
$$\begin{CD}
\overbrace{K_0(\m V/X) \otimes \cdots \otimes K_0(\m V/X)}^n @> \Psi^{n-1}  >> K_0(\m V/X) @> {T_y}_*>> H_*^{BM}(X)\otimes \bQ[y]\\
@VV g_* \otimes \cdots \otimes g_* V  @VV g_*V @VV g_*V \\
\underbrace{K_0(\m V/Y) \otimes \cdots \otimes K_0(\m V/Y)}_n @>>  \Psi^{n-1} > K_0(\m V/Y) @>> {T_y}_*> H_*^{BM}(Y)\otimes \bQ[y],
\end{CD} 
$$
\item for a proper smooth morphism $g:X \to Y$
$$\begin{CD}
\overbrace{K_0(\m V/Y) \otimes \cdots \otimes K_0(\m V/Y)}^n @> \Psi^{n-1}  >> K_0(\m V/X) @> {T_y}_*>> H_*^{BM}(X)\otimes \bQ[y]\\
@VV g^* \otimes \cdots \otimes g^* V  @VV g^*V @VV c(T_g) \cap g_*V \\
\underbrace{K_0(\m V/X) \otimes \cdots \otimes K_0(\m V/X)}_n @>>  \Psi^{n-1} > K_0(\m V/X) @>> {T_y}_*> H_*^{BM}(X)\otimes \bQ[y],
\end{CD} 
$$
\end{enumerate}
Here $\Psi^{n-1}([V \to X]):= [V \to X] \cdot \, \, \cdots \, \, \cdot [V \to X]$ is the fiber product of $n$ copies of $[V \to X]$. When $n=1$, $\Psi^0:= \op{id}_{K_0(\m V/X)}$ is understood to be the identity. Let $P(t) := \sum a_i t^i \in \bQ[t]$ be a polynomial. Then we define the polynomial transformation $\Psi_{P(t)}: K_0(\m V/X) \to K_0(\m V/X)$ by
$$\Psi_{P(t)} ([V \xrightarrow h X]):= \sum a_i \Psi^{i-1}([V \to X]).$$
Then we have the following commutative diagrams.
\begin{enumerate}
\item for a proper morphism $g:X \to Y$
$$\begin{CD}
K_0(\m V/X) @> \Psi_{P(t)}  >> K_0(\m V/X) @> {T_y}_*>> H_*^{BM}(X)\otimes \bQ[y]\\
@VV g_* V  @VV g_*V @VV g_*V \\
K_0(\m V/Y) @>> \Psi_{P(t)} > K_0(\m V/Y) @>> {T_y}_*> H_*^{BM}(Y)\otimes \bQ[y],
\end{CD} 
$$
\item for a proper smooth morphism $g:X \to Y$
$$\begin{CD}
K_0(\m V/Y) @> \Psi_{P(t)} >> K_0(\m V/X) @> {T_y}_*>> H_*^{BM}(X)\otimes \bQ[y]\\
@VV g^* V  @VV g^*V @VV c(T_g) \cap g_*V \\
K_0(\m V/X) @>> \Psi_{P(t)} > K_0(\m V/X) @>> {T_y}_*> H_*^{BM}(X)\otimes \bQ[y],
\end{CD} 
$$
\end{enumerate}
These are a ``motivic" analogue of the corresponding case of constructible functions:
\begin{enumerate}
\item for a proper morphism $g:X \to Y$
$$\begin{CD}
\m F(X) @> \m F_{P(t)}  >> \m F(X) @> c_*>> H_*^{BM}(X)\\
@VV g_* V  @VV g_*V @VV g_*V \\
\m F(Y) @>> \m F_{P(t)} > \m F(Y) @>> c_*> H_*^{BM}(Y)
\end{CD}
$$
\item for a proper smooth morphism $g:X \to Y$
$$\begin{CD}
\m F(Y) @> \m F_{P(t)} >> \m F(Y) @> c_*>> H_*^{BM}(Y)\\
@VV g^* V  @VV g^*V @VV c(T_g) \cap g^*V \\
\m F(X) @>> \m F_{P(t)} > \m F(X) @>> c_*> H_*^{BM}(X)
\end{CD}
$$
\end{enumerate}
Here $\m F_{P(t)}(\beta) := \sum a_i \beta^i$. Note also that the following diagram commutes
$$\begin{CD}
K_0(\m V/X) @> \Psi_{P(t)} >> K_0(\m V/X)\\
@VV \jeden_* V  @VV \jeden_*V  \\
\m F(X) @>> \m F_{P(t)} > \m F(X).
\end{CD}
$$

\end{rem}

\begin{defn} 
\begin{enumerate}
\item We refer to the following class
$${T_y}_*^{DT}(X) := \left({T_y}_*^{DT} \right) ([X \xrightarrow {id_X} X]) = {T_y}_*([\widetilde \nu_X^{mot}])$$
as the \emph{naive motivic Donaldson--Thomas type Hirzebruch class} of $X$.
\item The degree zero of the naive motivic Donaldson--Thomas type Hirzebruch class is called the \emph{naive motivic Donaldson--Thomas type $\chi_y$-genus} of $X$:
$$\chi_y^{DT}(X) := \int_X {T_y}_*^{DT}(X).$$
\end{enumerate}
\end{defn}

\begin{rem}
The cases of the three special values $y = -1, 0, 1$ are the following.
\begin{enumerate}
\item For $y=-1$, ${T_{-1}}_*^{DT}(X) = {T_{-1}}_*([\widetilde \nu_X^{mot}])=  c_*^{A\ell}(X)$.
\item For $y=0$, ${T_0}_*^{DT}(X) ={T_{0}}_*([\widetilde \nu_X^{mot}])=:td_*^{A\ell}(X)$, called an ``Aluffi-type" Todd class of $X$.
\item For $y=1$, ${T_1}_*^{DT}(X) ={T_{1}}_*([\widetilde \nu_X^{mot}])=:L_*^{A\ell}(X)$, called an ``Aluffi-type" Cappell--Shaneson L-homology class of $X$.
\end{enumerate}

The degree zero part of these three motivic classes are respectively: 
 \begin{enumerate}
\item for $y=-1$, $\chi_{-1}^{DT}(X) = (-1)^{\op{dim} X} \chi^{DT}(X)$, the original Donaldson--Thomas type invariant (i.e. Euler characteristic) of $X$ with the sign; 
\item for $y=0$, $\chi_{0}^{DT}(X) =:\chi_a^{DT}(X)$, called a \emph{naive Donaldson--Thomas type arithmetic geneus} of $X$ and 
\item for $y=1$, $\chi_{-1}^{DT}(X) = \sigma^{DT}(X)$
, called a \emph{naive Donaldson--Thomas type signature} of $X$.
\end{enumerate}
\end{rem}

\begin{rem} Since $\widetilde {\nu}_X(x) = 1$ for a smooth point $x \in X$, we have that $\widetilde{\nu}_X = \jeden_ X + \alpha_{X_{sing}}$ for some constructiblee functions $\alpha_{X_{sing}}$ supported on the singular locus $X_{sing}$. For example, consider the simplest case that $X$ has one isolated singularity $x_0$, say $\widetilde {\nu}_X = \jeden_X + a_0 \jeden_{x_0}$. Then
$$\widetilde {\nu}_X^{mot} = [X \xrightarrow {id_X} X] + a_0[x_0 \xrightarrow {i_{x_0}}  X] \in K_0(\m V/X).$$
Here $x_0 \xrightarrow {i_{x_0}}  X$ is the inclusion. Hence we have
\begin{align*}
{T_y}_*^{DT}(X) & = {T_y}_*(\widetilde {\nu}_X^{mot}) \\
& =  {T_y}_*([X \xrightarrow {id_X} X] + a_0[x_0 \xrightarrow {i_{x_0}} X]) \\
& =  {T_y}_*(X) + a_0 (i_{x_0})_*{T_y}_*(x_0)  \\
& =  {T_y}_*(X) + a_0.
\end{align*}
Thus the difference between the motivic DT type Hirzebruch class ${T_y}_*^{DT}(X)$ and the motivic Hirzebruch class 
${T_y}_*(X)$ is just $a_0$, independent of the parameter $y$. Of course, if $\op {dim} X_{sing} \geq 1$, then the difference DOES depend on the parameter $y$. For example, for the sake of simplicity, assume that $\widetilde{\nu}_X = \jeden_ X + a \jeden_{X_{sing}}$. Then the difference is
$${T_y}_*^{DT}(X) - {T_y}_*(X) = a (i_{X_{sing}})_*{T_y}_*(X_{sing}),$$
which certainly depends  on the parameter $y$, \emph{at least for the degree zero part $\chi_y (X_{sing})$}.

If we take a different motivic element $\overline{\nu}_X^{mot} = [X \xrightarrow {id_X} X] + [V \xrightarrow h X]$ such that $\jeden_*([V \xrightarrow h X]) = a_0\jeden_{x_0}$ and $\op {dim}V \geq 1$, then the difference ${T_y}_*^{DT}(X) - {T_y}_*(X) = h_*({T_y}_*(V))$, thus it DOES depend on the parameter $y$, at least for the degree zero part, again.

 In the case when $X$ is the critical locus of a regular function $f: M \to \bC$, the motivic DT invariant $\nu_X^{motivic}$ which DT-theory people consider, using the motivic Milnor fiber, is the latter case, simply due to the important fact that the Behrend function can be expressed using the Milnor fiber. For example, as done in \cite{CMSS}, even for an isolated singularity $x_0$, the difference ${T_y}_*^{DT}(X) - {T_y}_*(X)$ is, up to sign, the $\chi_y$-genus of (the Hodge structure of) the Milnor fiber at the singularity $x_0$, so does depend on the parameter $y$.

So, as long as the Behrend function has some geometric or topological descriptions, e.g., such as Milnor fibers, then one could think of the corresponding motivic elements in a naive or canonical way.

 We will hope to come back to properties of these two classes $td_*^{A\ell}(X)$, $L_*^{A\ell}(X)$ and $\chi_a^{DT}(X)$, $\sigma^{DT}(X)$ and discussion on some relations with other invariants of singularities. \\
  \end{rem}
 
 \begin{rem} In \cite{CMSS} Cappell et al. use the Hirzebruch class transformation 
 $${MHMT_y}_*: K_0(MHM(X)) \to H_*^{BM}(X) \otimes \bQ[y, y^{-1}]$$
 from the Grothendieck group $K_0(MHM(X))$ of the category of mixed Hodge modules (introduced by Morihiko Saito), instead of the Grothendieck group $K_0(\m V/X)$. We could do the same things on ${MHMT_y}_*: K_0(MHM(X)) \to H_*^{BM}(X) \otimes \bQ[y, y^{-1}]$ and get $MHM$-theoretic analogues of the above. We hope to get back to this calculation.
 \end{rem}
 \begin{rem} In \cite{GS} G\"ottsche and Shende made an application of the motivic Hirzebruch class ${T_y}_*$.
 \end{rem}
 \begin{rem} In a successive paper, we intend to apply the motivic Hirzebruch transformation to the motivic vanishing cycle constructed on the Donaldson--Thomas moduli space and announced in \cite{BBDJS, BJM}. This will hopefully provide the ``right" motivic Donaldson--Thomas type Hirzebruch class. 
 


 \end{rem}
\section{A bivariant group of pullbacks of constructible functions and a bivariant-theoretic problem}
In the above section we mainly dealt with the class $c_*^{\delta_X}(V \xrightarrow {h} X)$ of $h:V \to X$ supported on the target space $X$. In this section we deal with the class $\overline {c_*^{\delta_X}}(V \xrightarrow {h} X)$ of $h:V \to X$ supported on the source space $V$. 

The class $c_*^{\delta_X}(V \xrightarrow {h} X)$ is by definition $c_*(h_*h^*\delta_X) = h_*c_*(h^*\delta_X) \in H_*^{BM}(X)$, and can be captured as the image of the homomorphism from two abelian groups assigned to the space $X$. However, when it comes to the case of $\overline {c_*^{\delta_X}}(V \xrightarrow {h} X) \in H_*^{BM}(V)$, one cannot do it. So we approach this class from a bivariant-theoretic viewpoint as follows.

For a morphism $f:X \to Y$ and a constructible function $\delta_Y \in \m F(Y)$, we define $\bF^{\delta_Y}(X \xrightarrow {f} Y)$ as follows:

$$\bF^{\delta_Y}(X \xrightarrow {f} Y):= \left \{ \sum_S a_S {i_S}_* i_S^* f^* \delta_Y \, \Bigl | \, S \, \, \text {are closed subvarieties of } X, a_S \in \bZ \right \},$$
where  $i_S: S \to X$ is the inclusion map. For the sake of simplicity, unless some confusion is possible, we simply denote ${i_S}_* (i_S)^* f^* \delta_Y $ by $(f|_S)^*\delta_Y (= (i_S)^* f^* \delta_Y ) $. In particular, let us consider the twisted Behrend function $\widetilde{\nu}_Y$ as $\delta_Y$, i.e., $\bF^{\widetilde{\nu}_Y}(X \xrightarrow {f} Y)$, which shall be denoted by $\bF^{Beh}(X \xrightarrow f Y)$. It is easy to see the following lemma.

\begin{lem}
\begin{enumerate}
\item If $ Y$ is smooth, then $\bF^{Beh}(X \xrightarrow {f} Y) = \m F(X)$.
\item If $Y$ is singular and $f(X) \cap Y_{sing} = \emptyset$, $\bF^{Beh}(X \xrightarrow {f} Y) = \m F(X)$.
\item If $Y$ is singular and $f(X) \cap Y_{sing} \not = \emptyset$, $\bF^{Beh}(X \xrightarrow {f} Y) \subsetneqq \m F(X)$.
\item $\bF^{Beh}(X \xrightarrow {\pi } pt) = \m F(X)$.
\item If $X$ is smooth, $\bF^{Beh}(X \xrightarrow {\op{id}_X} X) = \m F(X)$.
\item If $X$ is singular, then $\bF^{Beh}(X \xrightarrow {\op{id}_X} X) \subsetneqq \m F(X)$ and in particular, the characteristic function $\jeden_X \not \in \bF^{Beh}(X \xrightarrow {\op{id}_X} X)$.
\end{enumerate}
\end{lem}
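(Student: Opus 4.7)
The plan is to give a pointwise description of $\bF^{Beh}(X \xrightarrow f Y)$ and then read off all six cases from it. First I would establish the key identity: for any constructible function $\beta \in \m F(X)$ and any closed subvariety $S \subset X$ with inclusion $i_S:S \hookrightarrow X$, the constructible function $(i_S)_*(i_S)^*\beta$ equals the pointwise product of $\beta$ with $\jeden_S$. Combined with the fact that every element of $\m F(X)$ is a finite $\bZ$-linear combination $\sum_S a_S \jeden_S$ of characteristic functions of closed subvarieties, this gives the identification
\[
\bF^{Beh}(X \xrightarrow f Y) = \{\, (f^*\widetilde\nu_Y)\alpha : \alpha \in \m F(X)\,\},
\]
i.e.\ the subgroup of $\m F(X)$ consisting of all pointwise products of $f^*\widetilde\nu_Y$ with a constructible function. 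Every assertion in the lemma now becomes a pointwise question about the single function $f^*\widetilde\nu_Y$.

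Cases (1), (2), (4), (5) can then be disposed of uniformly by verifying that $f^*\widetilde\nu_Y = \jeden_X$. Recall that $\widetilde\nu_Y(y) = 1$ at every smooth point $y \in Y$. In (1) and (4), $Y$ is smooth (a point is smooth of dimension zero), so $\widetilde\nu_Y = \jeden_Y$ globally and hence $f^*\widetilde\nu_Y = \jeden_X$. In (2), $f(X)$ avoids $Y_{sing}$, so $\widetilde\nu_Y \equiv 1$ on $f(X)$ and again $f^*\widetilde\nu_Y = \jeden_X$. Case (5) is (1) specialized to $Y=X$ and $f=\op{id}_X$. In each situation the subgroup $(f^*\widetilde\nu_Y)\m F(X) = \jeden_X \cdot \m F(X) = \m F(X)$, as desired.

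For (3) and (6) I would use the following pointwise criterion extracted from the characterization above: $\jeden_X$ lies in $\bF^{Beh}(X \xrightarrow f Y)$ if and only if there exists $\alpha \in \m F(X)$ with $(f^*\widetilde\nu_Y)(x)\,\alpha(x) = 1$ for every $x \in X$, which forces $(f^*\widetilde\nu_Y)(x) \in \{\pm 1\}$ at every $x$. In (3) the hypothesis $f(X) \cap Y_{sing} \not = \emptyset$ furnishes a point $x_0 \in X$ with $f(x_0) \in Y_{sing}$, and one invokes the fact that at sufficiently singular points of $Y$ the twisted Behrend function takes values outside $\{\pm 1\}$, producing the required obstruction to $\jeden_X$ being expressible and hence the strict containment. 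Case (6) is the specialization $f=\op{id}_X$, $Y=X$: the singularity of $X$ produces some point at which $\widetilde\nu_X$ is not $\pm 1$, whence $\jeden_X \not\in \widetilde\nu_X \cdot \m F(X)$. The main obstacle in this plan is precisely this last step for (3) and (6): one must rigorously establish that the Behrend function really does take values outside $\{\pm 1\}$ somewhere on the relevant singular locus. This would require either unpacking $\widetilde\nu = (-1)^{\op{dim}}\op{Eu}(\frak C)$ in terms of the distinguished cycle and the local Euler obstruction, or, in the critical-locus setting, appealing to Behrend's formula expressing $\nu$ through the Euler characteristic of the Milnor fiber.
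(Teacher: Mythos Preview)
The paper offers no argument for this lemma beyond ``It is easy to see.'' Your reduction $\bF^{Beh}(X \xrightarrow f Y) = (f^*\widetilde\nu_Y)\cdot\m F(X)$, obtained from $(i_S)_*(i_S)^*\beta = \jeden_S\cdot\beta$ and the fact that every constructible function is a $\bZ$-combination of characteristic functions of closed subvarieties, is correct and makes the analysis transparent; parts (1), (2), (4), (5) then follow immediately exactly as you indicate.

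The obstacle you flag in (3) and (6) is genuine, and it is more than a missing routine verification. With your description one has $\bF^{Beh}(X \xrightarrow f Y)=\m F(X)$ if and only if $f^*\widetilde\nu_Y$ takes only the values $\pm 1$ (and this in turn is equivalent to $\jeden_X$ lying in $\bF^{Beh}$). So (3) and (6) reduce to the claim that $\widetilde\nu_Y$ assumes some value outside $\{\pm 1\}$ on $f(X)\cap Y_{sing}$, respectively on $X_{sing}$. This is not automatic. For a reduced irreducible hypersurface one has $\widetilde\nu_Y=\op{Eu}_Y$, and it is not true in general that the local Euler obstruction avoids $\pm 1$ at every singular point (cones over suitable smooth projective varieties already give $\op{Eu}$ equal to $-1$ at the vertex); for non-hypersurfaces the distinguished cycle $\frak C_X$ has several components and the situation is still less constrained. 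Neither the Milnor-fiber description you mention nor a direct unpacking of $\op{Eu}(\frak C_X)$ will produce the needed inequality without some hypothesis on the singularity. In short, your argument does everything one can do short of establishing that last fact, and the paper supplies nothing further; parts (3) and (6) as stated for arbitrary singular $Y$ appear to require an additional assumption (for instance that $\widetilde\nu_Y$ vanishes, or satisfies $|\widetilde\nu_Y|\ge 2$, somewhere on the relevant singular locus) or else a sharper statement.
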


In order to show that $\bF^{Beh}(X \xrightarrow {f} Y)$ is a bivariant theory in the sense of Fulton and MacPherson \cite{FM}, first we quickly recall some basics about Fulton--MacPherson's bivariant theory.

\begin{defn}
A {\it bivariant theory} $\bB$ on a category $\m C$ is an assignment to each morphism 
$$X \xrightarrow f Y$$
 in the category $\m C$
a (graded) abelian group
$$\bB(X \xrightarrow f Y),$$ 
which is equipped with the following three basic operations:
\begin{itemize}
\item[(i)]  for morphisms $X \xrightarrow f  Y$ and $Y \xrightarrow g Z$, the {\it product operation} 
$$
\bullet : \bB(X \xrightarrow f  Y) \otimes \bB(Y \xrightarrow g Z) \to \bB(X \xrightarrow {gf} Z)
$$
is defined;
\item[(ii)]  for morphisms $X \xrightarrow f  Y$ and $Y \xrightarrow g Z$ with $f$ proper, the {\it pushforward operation} 
$$
f_*: \bB(X \xrightarrow {gf}  Z) \to \bB(Y \xrightarrow g Z) 
$$
is defined;
\item[(iii)]  for a fiber square 
\quad 
$\CD
X' @> g' >> X \\
@V f' VV @VV f V\\
Y' @>> g > Y, \endCD
$ \quad
the {\it pullback operation} 
$$
g^*: \bB(X \xrightarrow {f}  Y) \to \bB(X' \xrightarrow {f'} Y') 
$$
is defined.
\end{itemize}

These three operations are required to satisfy the seven axioms which are natural properties to make them compatible
each other:
\begin{itemize}
\item[{\bf(B1)}] product is associative;
\item[{\bf(B2)}] pushforward is functorial;
\item[{\bf(B3)}] pullback is functorial;
\item[{\bf(B4)}] product and pushforward commute;
\item[{\bf(B5)}] product and pullback commute;
\item[{\bf(B6)}] pushforward and pullback commute;
\item[{\bf(B7)}] projection formula.
\end{itemize}
\label{bivth}
\end{defn}

\begin{defn}
Let $\bB$ and $\bB'$ be two bivariant theories on a category $\m C.$ Then a {\it Grothendieck transformation}
from $\bB$ to $\bB'$ $$\gamma: \bB \longrightarrow\bB'$$ is a collection of morphisms 
$$\bB(X \xrightarrow f Y) \to \bB'(X \xrightarrow f  Y) $$
for each morphism $X \xrightarrow f Y$ in the category $\m C,$ which preserves the above three basic operations. 
\label{bivth2}
\end{defn}

As to the constructible functions we recall the following fact from \cite{Yokura}:
\begin{thm} If we define $\bF (X \xrightarrow {f} Y):=F(X)$ (ignoring the morphism $f$), then it become a bivariant theory, called the ``simple" bivariant theory of constructible functions with the following three bivariant operations:

\begin{itemize}
\item (bivariant product) 
$$\bullet : \bF(X \xrightarrow {f} Y) \otimes \bF (Y \xrightarrow {g} Z) \to \bF(X \xrightarrow {gf} Z),$$
$$ \alp \bullet \be := \alp \cdot f^* \be.$$

\item (bivariant pushforward) For morphisms $f: X \to Y$
and $g: Y \to Z$ with $f$ \emph {proper}
$$f_{\bigstar}: \bF(X \xrightarrow {gf} Z) \to \bF(Y \xrightarrow {g} Z)$$
$$f_{\bigstar}\alp := f_*\alp.$$

\item (bivariant pullback) For a fiber square \quad 
$\CD
X' @> g' >> X \\
@V f' VV @VV f V\\
Y' @>> g > Y, \endCD
$ \quad 
 $$g^{\bigstar}: \bF(X \xrightarrow {f} Y) \to \bF (X' \xrightarrow {f'} Y')$$
$$g^{\bigstar}\alp := (g')^*\alp.$$
\end{itemize}
\end{thm}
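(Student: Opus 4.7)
The plan is to verify each of the seven Fulton--MacPherson axioms (B1)--(B7) directly from the definitions, reducing each to a standard identity for constructible functions. The underlying tools are the functoriality of pullback $f^{*}$ and of proper pushforward $f_{*}$ on $\m F(-)$, the base change formula $g^{*}f_{*} = f'_{*}(g')^{*}$ attached to a fiber square with $f$ proper, the projection formula $f_{*}(\alp\cdot f^{*}\be) = f_{*}\alp \cdot \be$, and the observation that $f^{*}$ is a ring homomorphism on constructible functions with respect to pointwise multiplication. Functoriality of $f_{*}$ is the place where the standing characteristic zero assumption on $\frak K$ enters, as already noted in the paper in connection with Sch\"urmann's example.

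First, for associativity (B1), I unwind both sides along $X \xrightarrow{f} Y \xrightarrow{g} Z \xrightarrow{h} W$ for $\alp \in \m F(X)$, $\be \in \m F(Y)$, $\ga \in \m F(Z)$:
$(\alp \bullet \be) \bullet \ga = (\alp \cdot f^{*}\be) \cdot (gf)^{*}\ga = \alp \cdot f^{*}(\be \cdot g^{*}\ga) = \alp \bullet (\be \bullet \ga)$,
which uses $(gf)^{*} = f^{*}g^{*}$ together with the fact that $f^{*}$ preserves products. Axioms (B2) and (B3) are immediate from the functoriality of $f_{*}$ and $f^{*}$ respectively. For (B4), with $f$ proper, $\alp \in \m F(X)$ and $\be \in \m F(Z)$ one has
$f_{\bigstar}(\alp \bullet \be) = f_{*}(\alp \cdot f^{*}g^{*}\be) = f_{*}\alp \cdot g^{*}\be = (f_{\bigstar}\alp) \bullet \be$,
which is exactly the projection formula. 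Axiom (B5) reduces to the identity $(g')^{*}f^{*} = (f')^{*}g^{*}$, which is functoriality of pullback around the fiber square applied to the $f^{*}\be$ factor of a product. Axiom (B6) is the base change formula $g^{*}f_{*} = f'_{*}(g')^{*}$ applied pointwise to the constructible function $\alp$. Finally, (B7) is obtained by combining base change with the projection formula: one checks that $f'_{\bigstar}(g^{\bigstar}\alp \bullet \be) = f'_{*}((g')^{*}\alp \cdot (f')^{*}\be) = f'_{*}(g')^{*}\alp \cdot \be = g^{*}f_{*}\alp \cdot \be$, which is the required expression $g^{\bigstar}(f_{\bigstar}\alp) \bullet \be$.

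The main obstacle is not any individual verification, each of which collapses to a short manipulation with $f^{*}$, $f_{*}$, and pointwise multiplication, but rather ensuring that all the base-change identities among $f^{*}$ and $f_{*}$ for constructible functions are deployed in the correct generality. The whole theorem depends essentially on the characteristic zero hypothesis through the functoriality of proper pushforward; without it, axiom (B2) itself would fail, and with it the entire bivariant structure. Once functoriality of $f_{*}$ and the base change formula are granted, the rest of the argument is a mechanical bookkeeping of the definitions.
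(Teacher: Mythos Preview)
The paper does not actually prove this theorem; it is stated as a fact ``recalled from \cite{Yokura}'' and no argument is given here. Your direct verification of (B1)--(B7) is the right thing to do and is essentially what the cited reference contains: each axiom reduces to one of the standard identities for constructible functions (functoriality of $f^{*}$ and $f_{*}$, multiplicativity of $f^{*}$, the projection formula, and proper base change), exactly as you outline. Your emphasis on the characteristic-zero hypothesis entering through functoriality of $f_{*}$ is also correct and consistent with the paper's earlier remark.

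One small point: your formulation of (B7) is not quite the Fulton--MacPherson projection axiom. In their setup, with the fiber square
\[
\begin{CD}
X' @>{g'}>> X\\
@V{f'}VV @VV{f}V\\
Y' @>>{g}> Y
\end{CD}
\]
and $g$ proper, $\alpha \in \bB(X \xrightarrow{f} Y)$, $\beta \in \bB(Y' \xrightarrow{hg} Z)$, the axiom reads
$g'_{\bigstar}\bigl(g^{\bigstar}\alpha \bullet \beta\bigr) = \alpha \bullet g_{\bigstar}\beta$,
i.e.\ $g'_{*}\bigl((g')^{*}\alpha \cdot (f')^{*}\beta\bigr) = \alpha \cdot f^{*}g_{*}\beta$. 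This follows from the usual projection formula for $g'_{*}$ together with base change $g'_{*}(f')^{*} = f^{*}g_{*}$. Your version pushes along $f'$ rather than $g'$ and lands in a group that does not match the target of the Fulton--MacPherson axiom; as written it is really a restatement of (B4) combined with (B6) rather than (B7). The ingredients you invoke (projection formula plus base change) are exactly the right ones, so this is a labeling slip rather than a genuine gap, but you should rewrite the (B7) check with the correct roles of the maps.
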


\begin{thm} Here we consider the category of complex algebraic varieties. Then the above group $\bF^{Beh}(X \xrightarrow {f} Y)$ becomes a bivariant theory as a subgroup or subtheory of the above simple bivariant theory $\bF(X \xrightarrow {f} Y)$, provided that we consider smooth morphisms $g$ for the bivariant pullback.

\end{thm}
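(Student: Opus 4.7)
The plan is to verify that the three Fulton--MacPherson operations of the ambient simple bivariant theory $\bF$ restrict to the subgroups $\bF^{Beh}$; once this is established, the seven axioms are inherited automatically from $\bF$. The linchpin is the identity $g^*\widetilde{\nu}_Y = \widetilde{\nu}_{Y'}$ for a smooth morphism $g \colon Y' \to Y$ of relative dimension $d$, which follows from the formula $\nu_{Y'} = (-1)^d g^*\nu_Y$ recorded in Section~2 together with $\op{dim} Y' = \op{dim} Y + d$; this is the structural reason for working with the twisted Behrend function in the first place.

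For the pullback along a fiber square with $g$ smooth, applied to a generator $i_{S*} i_S^* f^* \widetilde{\nu}_Y$: base change for the closed immersion $i_S$ (which is unconditional for constructible functions) together with the commutativity $(g')^* f^* = (f')^* g^*$ rewrites $(g')^*(i_{S*} i_S^* f^* \widetilde{\nu}_Y)$ as the restriction of $(f')^* g^* \widetilde{\nu}_Y$ to the closed subscheme $(g')^{-1}(S) \subset X'$. The smoothness of $g$ then replaces $g^* \widetilde{\nu}_Y$ by $\widetilde{\nu}_{Y'}$, producing a generator of $\bF^{Beh}(X' \to Y')$ after decomposing $(g')^{-1}(S)$ into its irreducible components via inclusion--exclusion.

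For the pushforward along a proper $f \colon X \to Y$ fitting into $X \xrightarrow{f} Y \xrightarrow{g} Z$, I rewrite a generator $i_{S*} i_S^*(gf)^* \widetilde{\nu}_Z$ as $\jeden_S \cdot f^* g^* \widetilde{\nu}_Z$ and apply the projection formula for constructible functions to obtain $(f_* \jeden_S) \cdot g^* \widetilde{\nu}_Z$. Decomposing the proper direct image $f_* \jeden_S \in \m F(Y)$, supported on the closed set $f(S)$, as a $\bZ$-linear combination $\sum_T b_T \jeden_T$ over closed subvarieties $T \subset Y$ yields $\sum_T b_T \, i_{T*} i_T^* g^* \widetilde{\nu}_Z \in \bF^{Beh}(Y \to Z)$.

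For the bivariant product of $\alpha = i_{S*} i_S^* f^* \widetilde{\nu}_Y$ and $\beta = i_{T*} i_T^* g^* \widetilde{\nu}_Z$, base change for the closed immersion $i_T \colon T \hookrightarrow Y$ gives $f^* \beta = \jeden_{f^{-1}(T)} \cdot (gf)^* \widetilde{\nu}_Z$, hence $\alpha \bullet \beta = \jeden_{S \cap f^{-1}(T)} \cdot f^* \widetilde{\nu}_Y \cdot (gf)^* \widetilde{\nu}_Z$. The key manoeuvre is to expand $\widetilde{\nu}_Y = \sum_W c_W \jeden_W$ as a $\bZ$-combination of characteristic functions of closed subvarieties of $Y$; pulling back and distributing collapses the product to $\sum_W c_W \, i_{S \cap f^{-1}(T \cap W)*} i_{S \cap f^{-1}(T \cap W)}^* (gf)^* \widetilde{\nu}_Z$, which lies in $\bF^{Beh}(X \to Z)$ after irreducible decomposition. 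The main obstacle, and the precise reason for the smoothness hypothesis on $g$, is the pullback step: without smoothness the identity $g^* \widetilde{\nu}_Y = \widetilde{\nu}_{Y'}$ fails by the correction terms supported on the singular loci $X_{sing} \cup f^{-1}(Y_{sing})$ already noted in Section~2, and the subtheory closure genuinely breaks down.
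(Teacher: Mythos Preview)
Your argument is correct and follows the same overall strategy as the paper: verify closure of the three operations on generators, so that the axioms are inherited from the ambient simple theory $\bF$. The product and pullback steps match the paper's almost verbatim (you expand $\widetilde{\nu}_Y$ as $\sum_W c_W\jeden_W$, the paper expands $(f|_S)^*\delta_Y$ as $\sum_V a_V\jeden_V$; either works).

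The one genuine difference is in the pushforward step. You invoke the projection formula $f_*(\jeden_S\cdot f^*\beta)=(f_*\jeden_S)\cdot\beta$ and then decompose the constructible function $f_*\jeden_S$ into characteristic functions of closed subvarieties. The paper instead appeals explicitly to Verdier's stratification theorem \cite{Verdier} to exhibit $f|_S\colon S\to Y$ as a stratified submersion over a filtration $V_1\subset\cdots\subset V_m\subset Y$, concluding that $(f|_S)_*(f|_S)^*$ is multiplication by $\sum_i a_i\jeden_{V_i}$. Your route is more economical: since the ambient theory $\bF$ already presupposes that $f_*\colon\m F(X)\to\m F(Y)$ lands in constructible functions, the projection formula does all the work and Verdier need not be cited again. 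The paper's version has the virtue of making the geometric mechanism (constancy of fiber Euler characteristics along strata) visible, but logically it is re-proving something already granted.
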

\begin{proof} All we have to do is to show that those three bivariant operations are well-defined or stable on the subgroup $\bF^{Beh}(X \xrightarrow {f} Y)$. Below, as to bivariant product and bivariant pushforward, we do not need the requirement that $\delta_Y$ is the Behrend function $\nu_Y$, but we need  it for bivariant pullback.

\begin{enumerate}
\item (bivariant product) It suffices to show that
$$(f|_S)^*\delta_Y \bullet (g|_W)^*\delta_Z = (f|_S)^*\delta_Y \cdot f^*(g|_W)^*\delta_Z\in \bF^{\delta_Z}(X \xrightarrow {gf} Z).$$
Since $(f|_S)^*\delta_Y$ is a constructible function on $S$, $(f|_S)^*\delta_Y = \sum_V a_V\jeden_V$ where $V$'s are subvarieties of $S$, hence subvarieties of $X$. Thus
we get
\begin{align*}
(f|_S)^*\delta_Y \cdot f^*(g|_W)^*\delta_Z & = \sum_V a_V \jeden_V \cdot (gf|_{f^{-1}(W)})^*\delta_Z \\
& = \sum_V a_V (gf|_{f^{-1}(W) \cap V})^*\delta_Z 
\end{align*}
Since $f^{-1}(W) \cap V$ is a finite union of subvarieties, it follows that 
$(f|_S)^*\delta_Y \cdot f^*(g|_W)^*\delta_Z \in \bF^{\delta_Z}(X \xrightarrow{gf} Z).$

\item (bivariant pushforward) It suffices to show that 
$$f_*((gf|_S)^*\delta_Z) \in \bF^{\delta_Z}(Y \xrightarrow g Z).$$
More precisely, $f_*((gf|_S)^*\delta_Z) = f_*(i_S)_*(f|_S)^*g^*\delta_Z) = (f|_S)_*(f|_S)^*g^*\delta_Z.$
Now it follows from Verdier's result \cite[(5.1) Corollaire]{Verdier} that the morphism $f|_S: S \to Y$ is a stratified submersion, more precisely there is a filtration of closed subvarieties $V_1 \subset V_2 \subset \cdots \subset V_m \subset Y$ such that the restriction of $f|_S$ to each strata $V_{i+1} \setminus V_i$, i.e., $(f|_S)^{-1}(V_{i+1} \setminus V_i) \to  V_{i+1} \setminus V_i$ is a fiber bundle. Hence the operation $(f|_S)_*(f|_S)^*$ is the same as the multiplication $(\sum_i a_i \jeden_{V_i})\cdot$ with some integers $a_i$'s, i.e., 
$$(f|_S)_*(f|_S)^*g^*\delta_Z = (\sum_i a_i \jeden_{V_i})\cdot g^*\delta_Z = \sum_i a_i (g|_{V_i})^*\delta_Z \in \bF^{\delta_Z}(Y \xrightarrow g Z).$$

\item (bivariant pullback) Here we show that the following is well-defined
$$g^*: \bF^{\delta_Y}(X \xrightarrow f Y) \to \bF^{g^*\delta_Y}(X' \xrightarrow {f'} Y').$$
Consider the following fiber squares:
$$\CD
S' @> {g''} >> S \\
@V i_{S'} VV @VV i_S V\\
X' @> g' >> X \\
@V f' VV @VV f V\\
Y' @>> g > Y.
\endCD
$$

Indeed, 
\begin{align*}
g^*((f|_S)^*\delta_Y) & = (g')^*((f|_S)^*\delta_Y \quad \text {(by definition)} \\
& = (g')^*((i_S)_*(f|_S)^*\delta_Y \quad \text {(more precisely)} \\
& = (i_{S'})_*(g'')^*(i_S)^*f^*\delta_Y\\
& = (i_{S'})_*(i_{S'})^*(f')^*g^*\delta_Y \in \bF^{g^*\delta_Y}(X' \xrightarrow {f'} Y')\\
\end{align*}
Hence, if we take the twisted Behrend function $\widetilde {\nu}_Y$, for a smooth morphism $g:Y' \to Y$ we have $\widetilde {\nu}_{Y'} = g^*\widetilde{\nu}_Y.$

\end{enumerate}

\end{proof}

\begin{prob}\label{bivariant} Can one define a ``bivariant homology theory" $\widetilde{\bH}(X \to Y)$ such that
\begin{enumerate}
\item $\widetilde{\bH}(X \xrightarrow {f} Y) \subseteqq H_*^{BM}(X)$ for any morphism $f:X \to Y$,
\item $\widetilde{\bH}(X \xrightarrow {} Y) = H_*^{BM}(X)$ for a smooth $Y$,
\item the MacPherson's Chern class 
$$c_*:  \bF^{Beh}(X \xrightarrow {f} Y) \to \widetilde{\bH}(X \xrightarrow {f} Y) $$
defined by $c_*({i_S}_*i_S^* f^*\widetilde{\nu}_Y):= {i_S}_*c_*(i_S^* f^*\widetilde{\nu}_Y) \in H_*^{BM}(X)$ and extended linearly,
 becomes a Grothendieck transformation.
 \item if $Y$ is a point $pt$, then $c_*:  F(X) = \bF^{Beh}(X \xrightarrow {f} pt) \to \widetilde{\bH}(X \xrightarrow {f} pt) = H_*^{BM}(X)$
 is equal to the original MacPherson's Chern class homomorphism.
 \end{enumerate}
 \end{prob}
\begin{rem} One simple-minded construction of such a ``bivariant homology theory" $\widetilde{\bH}(X \to Y)$ could be simply the image of $\bF^{Beh}(X \xrightarrow {f} Y)$ under the MacPherson's Chern class $c_*:\m F(X) \to H_*^{BM}(X)$:
$$\widetilde{\bH}(X \to Y):= c_*(\bF^{Beh}(X \xrightarrow {f} Y)).$$\\
\end{rem} 

Before closing this section, we mention a bivariant-theoretic analogue of the covariant functor of conical Lagrangian cycles. 

In \cite{Kennedy} Kennedy proved that $Ch: F(X) \xrightarrow {\cong} \mathcal L(X)$ is an isomorphism.
In general, suppose we have a correspondence $\mathcal H$ such that
\begin{itemize}
\item $\mathcal H$ assigns an abelian group $\mathcal H(X)$ to a variety $X$
\item there is an isomorphism $\Theta_X:F(X) \xrightarrow {\cong} \mathcal H(X)$.
\end{itemize}
Then, if we define the pushforward $f_*:\mathcal H(X) \to \mathcal H(Y)$ for a map $f:X \to Y$ by
$$f^{\mathcal H}_*:= \mathcal H \circ f^F_* \circ \mathcal H^{-1}: \mathcal H(X) \to \mathcal H(Y)$$
then the correspondence $\mathcal H$ becomes a covariant functor \emph{via the covariant functor F}.
Here $f^F_*:F(X) \to F(Y)$, emphasizing the covariant functor $F$.
Similary, if we define the pullback $f^*:\mathcal H(Y) \to \mathcal H(X)$ by
$$f_{\mathcal H}^*:= \mathcal H \circ f_F^* \circ \mathcal H^{-1}: \mathcal H(Y) \to \mathcal H(X)$$
then the correspondence $\mathcal H$ becomes a contravariant functor \emph{via the contravariant functor F}.
Here $f_F^*:F(Y) \to F(X)$.
Furthermore, if we define
$$\bB \mathcal H(X \xrightarrow f Y):= \mathcal H(X)$$
then we get the simple bivariant-theoretic version of the correspondence $\mathcal H$ as follows:
\begin{itemize}
\item (Bivariant product) $\bullet_{\bB \mathcal H} :\bB \mathcal H(X \xrightarrow f Y) \otimes \bB \mathcal H(Y \xrightarrow g Z) \to \bB \mathcal H(X \xrightarrow {gf} Z)$ is defined by
$$ \alpha \bullet_{\bB \mathcal H} \beta := \mathcal H \Bigl (\mathcal H^{-1}(\alpha) \bullet_{\bF} \mathcal H ^{-1}(\beta) \Bigr ).$$
\item (Bivariant pushforward) $f_*^{\bB \mathcal H}: \bB \mathcal H(X \xrightarrow {gf} Z) \to  \bB \mathcal H(Y \xrightarrow g Z)$ is defined by
$$f_*^{\bB \mathcal H} := \mathcal H \circ f_*^{\bF} \circ \mathcal H^{-1}.$$
\item (Bivariant pullback) $g*_{\bB \mathcal H}: \bB \mathcal H(X \xrightarrow {f} Y) \to  \bB \mathcal H(X' \xrightarrow {f'} Y')$ is defined by
$$g^*_{\bB \mathcal H} := \mathcal H \circ f^*_{\bF} \circ \mathcal H^{-1}.$$
\end{itemize}

Clearly we get the canonical Grothendieck transformation
$$\ga_{\Theta}= \Theta : \bF(X \xrightarrow f Y) \to \bB \mathcal H(X \xrightarrow f Y).$$

If we apply this argument to the conical Lagrangian cycle $\mathcal L(X)$ we get the simple bivariant theory of conical Lagrangian cycles 
$$\bL(X \xrightarrow f Y)$$
and also we get the canonical Grothendieck transformation
$$\ga_{Ch}=Ch : \bF(X \xrightarrow f Y) \to \bL(X \xrightarrow f Y).$$
This simple bivariant theory $\bL(X \xrightarrow f Y)$ can be defined or constructed directly as done in \cite{Bussi}, in which one has to go through many geometric and/or topological ingredients.

The Fulton--MacPherson's bivariant theory $\bF^{FM}(X \xrightarrow f Y)$ is a subgroup (or a subtheory) of the simple bivariant theory $\bF(X \xrightarrow f Y) = F(X)$. Then if we define
$$\bL^{FM}(X \xrightarrow f Y) := \gamma_{Ch}(\bF^{FM}(X \xrightarrow f Y))$$
then we can get a finer bivariant theory of conical Lagrangian cycles, putting aside the problem of how we define or describe such a finer bivariant-theoretic conical Lagrangian cycle; it would be much harder than the case of the simple one $\bL(X \xrightarrow f Y)$ done in \cite{Bussi}.

\section{Some more questions and problems}
\subsection{A categorification of Donaldson--Thomas type invariant of a morphism}
 The cardinality $c(F)$ of a finite set $F$, i.e., the number of elements of $F$, satisfies that 
\begin{enumerate}
\item[(1)] $X \cong X'$ (set-isomorphism) $\Longrightarrow$ $c(X) = c(X')$,
\item[(2)] $c(X) = c(Y) + c(X \setminus Y)$ for a subset $Y \subset X$ (a \emph{scissor relation}),
\item[(3)]  $c(X \times Y) = c(X) \times c(Y)$,
\item[(4)]  $c(pt) =1$.
\end{enumerate}
Now, \underline {let us suppose} that there is a similar ``cardinality" on a category $\mathcal {TOP}$ of certain reasonable topological spaces, satisfying the above four properties, except for the condition (1) and (2), 
\begin{enumerate}
\item[(1)'] $X \cong X'$ ($\mathcal{TOP}$-isomorphism) $\Longrightarrow$ $c(X) = c(X')$,
\item[(2)']$c(X) = c(Y) + c(X \setminus Y)$ for a  closed subset  $Y \subset X$.
\item[(3) ] $c(X \times Y) = c(X) \times c(Y)$,
\item[(4) ] $c(pt) =1$.
\end{enumerate} 
If such a ``topological cardinality" exists, then we can show that $c(\mathbb R^1) = -1$, hence $c(\mathbb R^n) = (-1)^n$. Thus, for a finite $CW$-complex $X$, $c(X)$ is exactly the Euler--Poincar\'e characteristic $\chi(X)$. The existence of such a topological cardinality is \emph{guaranteed by the ordinary homology theory}, more precisely 
$$c(X) = \chi_c(X) := \sum (-1)^i \operatorname{dim}_{\mathbb R}H^i_c(X;\mathbb R) = \sum_i (-1)^i \operatorname{dim}_{\mathbb R}H^{BM}_i(X;\mathbb R).$$
 Here $H_*^{BM}(X)$ is the Borel--Moore homology group of $X$. 
 
Similarly let us suppose that there is a similar cardinality on the category $\mathcal V_{\bC}$ of complex algebraic varieties:
\begin{enumerate}
\item[ (1)''] $X \cong X'$ ($\mathcal V_{\bC}$-isomorphism) $\Longrightarrow$ $c(X) = c(X')$,
\item[ (2)''] $c(X) = c(Y) + c(X \setminus Y)$ for a  closed subvariety $Y \subset X$ (i.e., a closed subset in Zariski topology),
\item[(3)  ] $c(X \times Y) = c(X) \times c(Y)$,
\item[(4)  ] $c(pt) =1$.
\end{enumerate} 
We cannot do the same trick as we do for the above $c(\mathbb R^1) = -1$. The existence of such an algebraic cardinality is \emph{guaranteed by Deligne's theory of mixed Hodge structures}. Let $u, v$ be two variables, then the Deligne--Hodge polynomial $\chi_{u,v}$ is defined by 
$$\chi_{u,v}(X) = \sum (-1)^i\operatorname{dim}_{\mathbb C} Gr^p_F Gr^W_{p+q} (H^i_c(X;\mathbb C))u^p v^q.$$ In particular, $\chi_{u,v}(\mathbb C^1) = uv$. The partiuclar case when $u=-y, v = 1$ is the important one for the motivic Hirzebruch class:$\chi_y (X) := \chi_{-y,1}(X) = \sum (-1)^i \operatorname{dim}_{\mathbb C} Gr^p_F (H^i_c(X;\mathbb C))(-y)^p.$ This is called \emph{ $\chi_y$-genus} of $X$. \\

Similarly let us consider the Donaldson--Thomas type invariant of morphisms:
\begin{enumerate}
\item[(1)'''] $X \xrightarrow{f} Y  \cong X' \xrightarrow{f'}  Y$ (isomorphism) $\Longrightarrow$ $\chi^{DT}(X \xrightarrow{f}  Y) = \chi^{DT}(X' \xrightarrow{f'} Y)$,
\item[(2)''']  $\chi^{DT}(X \xrightarrow{f}  Y) =\chi^{DT}(Z \xrightarrow{f|_Z}  Y) + \chi^{DT}(X \setminus Z \xrightarrow{f|_{X \setminus Z}}  Y)$ for a  closed subvariety $Z \subset X$.
\item[(3)'''] $\chi^{DT}(X_1 \times X_2  \xrightarrow {f_1 \times f_2} Y_1 \times Y_2) =\chi^{DT}(X_1 \xrightarrow {f_1} Y_1) \times \chi^{DT}(X_2 \xrightarrow {f_2} Y_2)$,
\item[(4) \, ] $\chi^{DT}(pt) =1$.\\
\end{enumerate} 

So, just like the above two cardinalities or counting $\chi_c(X)$ and $\chi_{u,v}(X)$, we pose the following problem, which is related to the above Problem \ref{bivariant}:
\begin{prob} Is there some kind of bivariant theory $\Theta^?(X \xrightarrow{f}  Y)$ such that
\begin{enumerate}
\item $\chi^{DT}(X \xrightarrow{f}  Y) = \sum_i (-1)^i \op{dim} \Theta^?(X \xrightarrow{f}  Y)?$
\item When $Y$ is smooth, $\Theta(X \xrightarrow{f}  Y)$ is (or should be) isomorphic to the Borel--Moore homology theory $H_*^{BM}(X)$ (which is isomorphic to the Fulton--MacPherson bivariant homology theory $\bH (X \xrightarrow{f}  Y)$).
\end{enumerate}
\end{prob}

\begin{rem}
\begin{enumerate}
\item When $Y$ is smooth, we have $\chi^{DT}(X \xrightarrow{f}  Y) = (-1)^{\op{dim}Y}\chi(X),$ that is
$\chi^{DT}(X \xrightarrow{f}  Y) = (-1)^{\op{dim}Y}\sum_i (-1)^i \op{dim} H_i^{BM}(X)= (-1)^{\op{dim}Y}\sum_i (-1)^i \op{dim} \bH^{-i}(X \xrightarrow{f}  Y).$
 In the above formulation $\chi^{DT}(X \xrightarrow{f}  Y) = \sum_i (-1)^i \op{dim} \Theta^?(X \xrightarrow{f}  Y)$ the sign part $(-1)^i$ should get involve something of the morphism $f$ as well.

\item Even for the identity $X \xrightarrow{\op{id}_X}  X$, since $\chi^{DT}(X) \not = \chi^{DT}(Z) + \chi^{DT}(X \setminus Z)$, the cohomological part $\Theta (X \xrightarrow{\op{id}_X}  X)$ of such a theory (if it existed) does not satisfy the usual long exact sequence for a pair $Z \subset X$, and it should satisfy a modified one so that $$\chi^{DT}(X) = \chi^{DT}(Z \xrightarrow{inclusion}  X) + \chi^{DT}(X \setminus Z \xrightarrow{inclusion}  X)$$ is correct.\\
\end{enumerate} 
\end{rem}

\subsection{A higher class analogue of MNOP conjecture and a generalized MacMahon function}
In \cite{LP} M. Levine and R. Pandharipnade showed the MNOP conjecture \cite{MNOP}, which is nothing but the homomorphism
$$M(q): \Omega^{-3}(pt) \to \bQ[[q]], \, \text{defined by} \, \,  M(q)([X]):= M(q)^{\int_X c_3(T_X \otimes K_X)},$$
where $\Omega^*(X)$ is Levine--Morel's algebraic cobordism \cite{LM} (also see \cite{L3} and \cite{LP})  and
$$M(q) := \prod_{n\leqq 1}\frac{1}{(1-q^n)^n} = 1 + q + 3q^2 + 6q^3 +13q^4 + \cdots$$
is the MacMahon function. A naive question on the above homomorphism $M(q): \Omega^{-3}(pt) \to \bQ[[q]]$ is:

\begin{qu} To what extent could one extend the homomorphism $M(q): \Omega^{-3}(pt) \to \bQ[[q]]$ to a higher dimensional variety $Y$ instead of $Y =pt$ being the point? Namely, could one get the homomorphism
$$M(q): \Omega^*(Y) \to H_*^{BM}(Y) \otimes \bQ[[q]]$$
defined by
$$ M(q)([X \xrightarrow {f} Y]):= M(q)^{f_*\left (c_{\op{dim}X- \op{dim}Y}(T_f \otimes K_f) \cap [X]  \right)}?$$
Here by the construction of the algebraic cobordism $X$ and $Y$ are both smooth, $T_f := T_X - f^*T_Y$ and $K_f:= K_X - f^*K_Y$.
\end{qu}

Note that for $Y =pt$ the above $M(q): \Omega^*(Y) \to H_*^{BM}(Y) \otimes \bQ[[q]]$ is nothing but $M(q): \Omega^{-3}(pt) \to \bQ[[q]]$ in the case when $\op{dim}X = 3$.  The MacMahon function has a combinatorial origin as the generating function for the number of 3-dimensional partitions of size n (as explained in \cite{L3}). It is speculative that the MacMahon function is involved only in the case when $\op{dim}X - \op{dim}Y = 3$. If it were the case, the following more specific problem should be posed: 
\begin{prob} Could one get the homomorphism
$$
M(q): \Omega^{-3}(Y) \to H_*^{BM}(Y) \otimes \bQ[[q]] \,\, 
\text{defined by} \, \, 
M(q)([X \xrightarrow {f} Y]):= M(q)^{f_*\left (c_3(T_f \otimes K_f) \cap [X]  \right)}?
$$
\end{prob}

\begin{rem} Note that the dimension $d$ of an element $[X \xrightarrow {f} Y] \in \Omega^d(Y)$ means that $d = \op{codim}f = \op{dim}Y - \op{dim}X$, hence if $Y = pt$, then $\op{dim}X = 3$ implies that $d =-3$.  Moreover, for a general dimension $d$, say $d < -3$, one should come with some other functions, i.e. ``$d$-dimensional generalized  MacMahon function $\widetilde {M(q)}_d$" such that when $d = -3$ it is the same as the original MacMahon function $M(q)$, i.e. $\widetilde {M(q)}_{-3} = M(q)$. Such a formulation would be useful in Donaldson--Thomas theory for $d$-Calabi--Yau manifolds with $d > 3.$ However, we have to point out that the above function $\widetilde {M(q)}_d$ for the generating function of dimension $d$ partitions is now known to be not correct, although it does appear to be asymptotically correct in dimension four \cite{BBS,MR}. Following ideas from algebraic cobordism as in \cite{LP}, we hope to investigate further in this direction in a future work. 
\end{rem}

\section{Acknowledgements}
Some parts of the paper are based on what we observed or thought during the AIM Workshop ``Motivic Donaldson--Thomas Theory and Singularity Theory" held at the Renyi Institute, Budapest, Hungary, May 7 -- May 11, 2012. We would like to express our sincere thanks to the organizers, Jim Bryan, Andr\'as N\'emethi, Davesh Maulik, J\"org Sch\"urmann, Bal\'azs Szendr\H{o}i, and \'Agnes Szil\'ard for a wonderful organization and to the AIM for financial support for our participation at the workshop. We also would like to thank Bal\'azs Szendr\H{o}i, Dominic Joyce, Laurentiu Maxim, J\"org Sch\"urmann and Vivek Shende for valuable comments and suggestions. \\

\end{document}